\numberwithin{equation}{section}
\theoremstyle{plain}
\newtheorem{theorem}{Theorem}[section]
\newtheorem{lemma}[theorem]{Lemma}
\newtheorem{proposition}[theorem]{Proposition}
\theoremstyle{definition}
\newtheorem{definition}[theorem]{Definition}
\newtheorem{remark}[theorem]{Remark}
\setlist[itemize]{noitemsep}
\title{\textbf{Orthogonality relations and operators on bounded quasi-implication algebras}}
\author{Joseph McDonald\footnote{University of Alberta, Department of Philosophy, Edmonton, Canada, T6G 2E7 \\Email: jsmcdon1@ualberta.ca}}
\date{}
\begin{document}

\maketitle
\begin{abstract}
   In this note, we study various relational and algebraic aspects of the bounded quasi-implication algebras introduced by Hardegree \cite{hardegree1, hardegree2}. By generalizing the constructions given by MacLaren \cite{maclaren} and Goldblatt \cite{goldblatt} within the setting of ortholattices, we construct various orthogonality relations from bounded quasi-implication algebras.

   We then introduce certain bounded quasi-implication algebras with an additional operator, which we call \emph{monadic quasi-implication algebras}, and study them within the setting of quantum monadic algebras. A \emph{quantum monadic algebra} is an orthomodular lattice equipped with a closure operator, known as a \emph{quantifier}, whose closed elements form an orthomodular sub-lattice. It is shown that every quantum monadic algebra can be converted into a monadic quasi-implication algebra with the underlying magma structure being determined by the operation of Sasaki implication on the underlying orthomodular lattice. It is then conversely demonstrated that every monadic quasi-implication algebra can be converted into a quantum monadic algebra. These constructions are shown to induce an isomorphism between the category $\mathbf{QMA}$ of quantum monadic algebras and the category $\mathbf{MQIA}$ of monadic quasi-implication algebras.

   Finally, by generalizing the constructions given by Harding \cite{harding} as well as Harding, McDonald, and Peinado \cite{harding3} in the setting of monadic ortholattices, we construct various monadic orthoframes from monadic quasi-implication algebras.    
\par
\vspace{.2cm}
\noindent \textbf{Keywords:} Bounded quasi-implication algebra; Orthogonality relation; Monadic quasi-implication algebra; Quantum monadic algebra.  
\end{abstract}

\section{Introduction}\label{sec:intro}
A \emph{Boolean implication algebra} is a magma $\langle A;\cdot\rangle$ satisfying the contraction, quasi-commutativity, and exchange equations:\footnote{We use the term \emph{magma} in the sense of universal algebra to refer to a set $A$ equipped with a binary operation $\cdot\colon A^2\to A$ which is closed with respect to $A$. Magma are also commonly referred to as \emph{binars} (see Bergman \cite{bergman}.)}     
    \begin{enumerate}
        \item $(x\cdot y)\cdot x=x$;
        \item $(x\cdot y)\cdot y=(y\cdot x)\cdot x$; 
        \item $x\cdot (y\cdot z)=y\cdot(x\cdot z)$.
    \end{enumerate}
Boolean implication algebras were introduced by Abbott in \cite{abbott671,abbott672} for the purposes of studying the algebraic properties of the operation of implication within the classical propositional calculus. Abbott showed that every Boolean implication algebra admits of an underlying partial ordering defined by $x\preceq y\Longleftrightarrow x\cdot y=1$ as well as an upper bound constant defined by $1:=x\cdot x$. Moreover, by equipping any Boolean implication algebra $A$ with a lower bound constant $0$ defined by $0\cdot x=1$, the algebraic structure $\langle A;\odot,\oplus,^*,0,1\rangle$ forms a Boolean algebra where $x^*:=x\cdot 0$ is the operation of Boolean complementation, $x\oplus y:=(x\cdot y)\cdot y$ is the operation of least upper bound, and $x\odot y:=(x^*\oplus y^*)^*$ is the operation of greatest lower bound. Conversely, Abbott demonstrated that by defining an operation $p_c\colon B\times B\to B$ on a Boolean algebra $B$ by the lattice polynomial $p_c(x,y):=x^{\perp}\vee y$, the magma $\langle B;p_c\rangle$ becomes a bounded Boolean implication algebra. Closely related to the Boolean implication algebras are the Hilbert algebras studied in \cite{diego} as well as the $L$-algebras introduced in \cite{rump}. Hardegree \cite{hardegree1, hardegree2} generalized implication algebras by introducing quasi-implication algebras which model the operation of Sasaki implication within orthomodular quantum logic -- the logical calculus corresponding to orthomodular lattices. Just as in the case of bounded Boolean implication algebras and Boolean algebras, the theory of bounded quasi-implication algebras stands in a 1-1 correspondence with the theory of orthomodular lattices.

The first part of this paper investigates some relational aspects of bounded quasi-implication algebras. In particular, by mimicking the constructions given by MacLaren \cite{maclaren} and Goldblatt \cite{goldblatt} in the setting of ortholattices, we construct certain orthogonality relations from bounded quasi-implication algebras through their non-zero elements as well as through their proper filters. In the second part of this paper, monadic quasi-implication algebras are introduced as certain bounded quasi-implication algebras equipped with an additional operator. It is first shown that every monadic quasi-implication algebra gives rise to a quantum monadic algebra. A quantum monadic algebra is an orthomodular lattice equipped with a closure operator, known as a quantifier, whose closed elements form an orthomodular sub-lattice. Quantum monadic algebras were first studied by Janowitz \cite{janowitz} and later by Harding \cite{harding} and form non-distributive generalizations of the monadic Boolean algebras studied by Halmos \cite{halmos}, which provide an algebraic model for classical predicate calculus in a single variable. It is then conversely demonstrated that every quantum monadic algebra can be converted into a monadic quasi-implication algebra with the underlying magma structure being given by the operation of Sasaki implication on the underlying orthomodular lattice. These constructions are shown to induce an isomorphism between the category $\mathbf{QMA}$ of quantum monadic algebras and the category $\mathbf{MQIA}$ of monadic quasi-implication algebras. Finally, by generalizing the constructions given by Harding, McDonald, and Peinado in the setting of monadic ortholattices, we construct various monadic orthoframes from a monadic quasi-implication algebra. 

\section{Bounded quasi-implication algebras}
We first provide some preliminaries for orthomodular lattices and bounded quasi-implication algebras. For more details on the former, consult \cite{kalmbach} and for the latter, consult \cite{hardegree1}. We will often conflate between an algebra $\langle A;(\odot^n_{i})_{i\in I}\rangle$ and its underlying carrier set $A$.  

\begin{definition}\label{ortholattice}
    An \emph{ortholattice} is an algebra $\langle A;\wedge,\vee,^{\perp},0,1\rangle$ of similarity type $\langle 2,2,1,0,0\rangle$ satisfying the following conditions: 
    \begin{enumerate}
        \item $\langle A;\wedge,\vee,0,1\rangle$ is a bounded lattice;
        \item the operation $^{\perp}\colon A\to A$ is an \emph{orthocomplementation}: 
        \begin{enumerate}
             \item $x\wedge x^{\perp}=0$; $x\vee x^{\perp}=1$;
            \item $x\leq y\Rightarrow y^{\perp}\leq x^{\perp}$;
            \item $x=x^{\perp\perp}$.
        \end{enumerate}
    \end{enumerate}
\end{definition}

Note that conditions $2(a)-2(c)$ are the requirement that the operation of orthocomplementation be a complementation that is an order-inverting involution. Ortholattices can be defined by a finite set of equations and hence form a variety in the sense of universal algebra (see for instance \cite{bergman}). Ortholattices, unlike Boolean algebras, are not necessarily distributive. Indeed, an ortholattice $A$ is a Boolean algebra if and only if $A$ is distributive.    
\begin{definition}\label{oml}
    An \emph{orthomodular lattice} is an ortholattice $\langle A;\wedge,\vee,^{\perp},0,1\rangle$ satisfying the following quasi-equation:
    \[x\leq y\Longrightarrow y=x\vee(x^{\perp}\wedge y).\]
\end{definition}

Prototypical examples of orthomodular lattices include the lattice $\mathcal{C}(H)$ of closed linear subspaces of a Hilbert space $H$. The following gives several well-known characterizations of orthomodular lattices.

\begin{proposition}[\cite{kalmbach}]\label{oml characterization}
For any ortholattice $A$, the following are equivalent: 
\begin{enumerate}
    \item $A$ is an orthomodular lattice (in the sense of Definition \ref{oml});
    \item $A$ satisfies the condition that if $x\leq y$ and $x^{\perp}\wedge y=0$, then $x=y$;
    \item $A$ satisfies  $x\vee y=x\vee(x^{\perp}\wedge(x\vee y))$ or $x\wedge y=x\wedge(x^{\perp}\vee(x\wedge y))$;
    \item $A$ contains no sublattice isomorphic to the Benzene ring: 
    \[
 \begin{tikzpicture}
  \node (a) at (0,1) [label=above:$1$] {$\bullet$};
  \node (d) at (0,-1) [label=below:$0$] {$\bullet$};
  \node (e) at (-1,-.45) [label=left:$y$] {$\bullet$};
  \node (f) at (-1,.45) [label=left:$x$] {$\bullet$};
    \node (g) at (1,-.45) [label=right:$x^{\perp}$] {$\bullet$};
  \node (h) at (1,.45) [label=right:$y^{\perp}$] {$\bullet$};
  \draw (e) -- (f) (g) -- (h) (e) -- (d) (g) -- (d) (f) -- (a) (h) -- (a) (a);
  \draw[preaction={draw=white, -,line width=6pt}];
  \end{tikzpicture}
  \]
\end{enumerate}
\end{proposition}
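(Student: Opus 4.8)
The plan is to take orthomodularity, condition $(1)$, as a hub and prove the equivalences $(1)\Leftrightarrow(2)$, $(1)\Leftrightarrow(3)$, and $(1)\Leftrightarrow(4)$. Throughout I would argue inside the fixed ortholattice $A$, using only the orthocomplementation laws of Definition \ref{ortholattice} together with the induced order; in particular the De Morgan laws $(x\wedge y)^{\perp}=x^{\perp}\vee y^{\perp}$ and $(x\vee y)^{\perp}=x^{\perp}\wedge y^{\perp}$, which follow from those laws, will be used freely.

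For $(1)\Leftrightarrow(2)$, the direction $(1)\Rightarrow(2)$ is immediate: if $x\leq y$ and $x^{\perp}\wedge y=0$, then orthomodularity gives $y=x\vee(x^{\perp}\wedge y)=x\vee 0=x$. For $(2)\Rightarrow(1)$ I would fix $x\leq y$, set $z:=x\vee(x^{\perp}\wedge y)$, and note $x\leq z\leq y$, so that it suffices by $(2)$ applied to $z\leq y$ to check $z^{\perp}\wedge y=0$. The key step is to put $w:=z^{\perp}\wedge y$ and observe that $x\leq z$ gives $w\leq z^{\perp}\leq x^{\perp}$, whence $w\leq x^{\perp}\wedge y\leq z$; together with $w\leq z^{\perp}$ this forces $w\leq z\wedge z^{\perp}=0$. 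Thus $z^{\perp}\wedge y=0$, and $(2)$ yields $z=y$, which is exactly orthomodularity.

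For $(1)\Leftrightarrow(3)$, I would note that the first identity of $(3)$ is precisely orthomodularity specialized to the comparable pair $x\leq x\vee y$: assuming $(1)$ it holds for all $x,y$, and conversely, if it holds for all $x,y$ then for $x\leq y$ we have $x\vee y=y$ and it collapses to $y=x\vee(x^{\perp}\wedge y)$. The second identity is the dual of the first, obtained by substituting $x^{\perp},y^{\perp}$ for $x,y$ and applying $^{\perp}$ with the involution and De Morgan laws; hence each identity is equivalent to $(1)$, and therefore so is their disjunction.

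The delicate part is $(1)\Leftrightarrow(4)$, which I would prove contrapositively through $(2)$. If $A$ has a sub-ortholattice isomorphic to the Benzene ring, then since meets and joins in a subalgebra coincide with those in $A$, the pair $y\leq x$ in the displayed labelling satisfies $y^{\perp}\wedge x=0$ while $y\neq x$, so $(2)$ fails and $A$ is not orthomodular. Conversely, if $(2)$ fails there are $a<b$ with $a^{\perp}\wedge b=0$, and dualizing gives $a\vee b^{\perp}=1$; I would then show that $\{0,a,b,1,a^{\perp},b^{\perp}\}$ is a copy of the Benzene ring with $(y,x)=(a,b)$. The main obstacle is concentrated here: one must rule out every degenerate collapse — for instance $a=0$ forces $b=a^{\perp}\wedge b=0=a$, and $b=1$ forces $a^{\perp}=a^{\perp}\wedge b=0$, hence $a=1=b$, while $b=a^{\perp}$ forces $b=a^{\perp}\wedge b=0$ — check that the six elements are pairwise distinct, and verify that all remaining meets and joins land back in the set (such as $a\wedge b^{\perp}=0$ and its dual $a^{\perp}\vee b=1$). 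These verifications are routine individually, but it is their totality that certifies the configuration is genuinely the hexagon and not a proper collapse of it.
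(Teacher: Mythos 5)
The paper does not prove this proposition; it is quoted from Kalmbach's book, so there is no in-paper argument to compare against. Your proof is the standard one and is correct: the hub-and-spoke decomposition through condition $(1)$, the $w:=z^{\perp}\wedge y$ trick for $(2)\Rightarrow(1)$, the observation that each identity in $(3)$ is orthomodularity read off the comparable pair $x\leq x\vee y$ (with the second identity the De Morgan dual of the first), and the hexagon construction from a failure of $(2)$ are all exactly how this result is established in the literature. The only place where you defer work is the tail of $(1)\Leftrightarrow(4)$, and your deferral is honest: beyond the three collapses you exhibit ($a=0$, $b=1$, $b=a^{\perp}$), one should also rule out $a=b^{\perp}$ (which forces $b^{\perp}\leq b$, hence $b=1$) and $a=a^{\perp}$ (which forces $a=0$), and check the four incomparabilities such as $a\not\leq b^{\perp}$ (else $a=a\wedge b^{\perp}\leq b\wedge b^{\perp}=0$); since $u\leq v$ iff $u\wedge v=u$, your verification that all pairwise meets and joins of $\{0,a,b,a^{\perp},b^{\perp},1\}$ land in the prescribed places does determine the order and certifies the hexagon. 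One presentational note: in the first half of $(1)\Rightarrow(4)$ (contrapositive) you need the copy of the Benzene ring to be a \emph{sub-ortholattice}, i.e.\ closed under $^{\perp}$ with the complements as labelled, so that $y^{\perp}\wedge x$ computed in $A$ is the $0$ of the hexagon; this is how the labelled diagram in the statement is meant to be read, and you use it correctly, but it is worth saying explicitly since the statement's word ``sublattice'' alone would not suffice.
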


Hardegree \cite{hardegree1} described the following as the minimal conditions that an implication $\cdot\colon A\times A\to A$ should be required to satisfy in an ortholattice:
\begin{enumerate}
    \item $x\leq y\Rightarrow x\cdot y=1$;
    \item $x\wedge(x\cdot y)\leq y$;
    \item $y^{\perp}\wedge(x\cdot y)\leq x^{\perp}$.
\end{enumerate}
These conditions form the algebraic counterparts to the laws of implication, modus ponens, and modus tollens, respectively. Hardegree \cite{hardegree3} showed that there are exactly 3 ortholattice polynomials satisfying these conditions in an orthomodular lattice: 
\begin{enumerate}
    \item $p_s(x,y)=x^{\perp}\vee(x\wedge y)$ (Sasaki implication);
    \item $p_d(x,y)=(x^{\perp}\wedge y^{\perp})\vee y$ (Dishkant implication);
    \item $p_k(x,y)=(x\wedge y)\vee(x^{\perp}\wedge y)\vee(x^{\perp}\wedge y^{\perp})$ (Kalmbach implication).
\end{enumerate}
One can easily demonstrate that when an ortholattice is distributive (i.e., a Boolean algebra), then: \[p_c(x,y)=p_s(x,y)=p_d(x,y)=p_k(x,y).\] Notice that in the Benzene ring depicted in Proposition \ref{oml characterization} that:
\[x\wedge p_c(x,y)=x\wedge(x^{\perp}\vee y)=x\wedge 1=x>y\]
so $p_c$ (i.e., classical implication) does not in general satisfy the algebraic implicational law of modus ponens in an ortholattice.

We now describe the variety of quasi-implication algebras introduced by Hardegree \cite{hardegree1, hardegree2}, which serve as an algebraic model for the operation of Sasaki implication on an orthomodular lattice. 
\begin{definition}\label{quasi-implication algebra}
    A \emph{quasi-implication algebra} is a magma $\langle A;\cdot\rangle$ satisfying: 
    \begin{enumerate}
        \item $(x\cdot y)\cdot x=x$;
        \item $(x\cdot y)\cdot(x\cdot z)=(y\cdot x)\cdot(y\cdot z)$;
        \item $((x\cdot y)\cdot(y\cdot x))\cdot x=((y\cdot x)\cdot(x\cdot y))\cdot y$.
    \end{enumerate}
\end{definition}
The following is an easy consequence of the axioms for quasi-implication algebras given above.  
\par
\vspace{1cm}
\begin{lemma}[\cite{hardegree1}]\label{lemma}
    Any quasi-implication algebra $\langle A;\cdot\rangle$ satisfies:
    \begin{enumerate}
        \item $x\cdot(x\cdot y)=x\cdot y$;
        \item $x\cdot x=(x\cdot y)\cdot(x\cdot y)$;
        \item $x\cdot x=y\cdot y$.
    \end{enumerate}
    \end{lemma}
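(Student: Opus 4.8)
The plan is to derive the three identities in the stated order, since each builds on the previous ones, using only the three axioms of Definition~\ref{quasi-implication algebra}.

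For part~(1), I would try to manipulate $x\cdot(x\cdot y)$ using axiom~2, which relates products of the form $(x\cdot u)\cdot(x\cdot v)$. The natural move is to find a substitution that makes axiom~2 collapse. Setting $y:=x$ in axiom~2 gives $(x\cdot x)\cdot(x\cdot z)=(x\cdot x)\cdot(x\cdot z)$, which is vacuous, so instead I would look for a substitution exploiting axiom~1, namely $(x\cdot y)\cdot x=x$. The idea is that axiom~1 tells us $x$ is a \emph{fixed point} of the operation $(x\cdot y)\cdot(-)$ when applied to $x$ itself. I would try substituting into axiom~2 so that one side becomes $x\cdot(x\cdot y)$ and the other simplifies via axiom~1 to $x\cdot y$. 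Concretely, instantiating axiom~2 with the roles of variables chosen so that a factor $(x\cdot y)\cdot x$ appears, then applying axiom~1 to reduce it to $x$, should yield the result after one or two rewrites. The main obstacle here is finding the right substitution instance, since axiom~2 has a symmetric two-variable form and it is not immediately obvious which assignment produces the desired cancellation.

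For part~(2), once part~(1) is established, I would use it together with axiom~1 to show $x\cdot x=(x\cdot y)\cdot(x\cdot y)$. The right-hand side has the shape $u\cdot u$ where $u=x\cdot y$, so the strategy is to compute $(x\cdot y)\cdot(x\cdot y)$ by first rewriting the inner $x\cdot y$ using part~(1) as $x\cdot(x\cdot y)$, then applying axiom~2 or axiom~1 to fold the expression back to $x\cdot x$. Alternatively, I would apply part~(1) with $x$ replaced by $x\cdot y$ and $y$ chosen appropriately, combined with the fact from axiom~1 that $(x\cdot y)\cdot x=x$, to relate $(x\cdot y)\cdot(x\cdot y)$ to $x\cdot x$. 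I expect this to follow by a short chain of substitutions.

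For part~(3), the identity $x\cdot x=y\cdot y$ asserts that the idempotent $x\cdot x$ is independent of $x$, justifying the definition of the constant $1$. Given part~(2), which states $x\cdot x=(x\cdot y)\cdot(x\cdot y)$, the plan is to run the analogous argument with the roles of $x$ and $y$ exchanged to obtain $y\cdot y=(y\cdot x)\cdot(y\cdot x)$, and then invoke axiom~3 (the third defining identity) to bridge the two. Axiom~3 is the natural tool here because it is the one axiom that symmetrically mixes $x$ and $y$ through the terms $(x\cdot y)\cdot(y\cdot x)$ and $(y\cdot x)\cdot(x\cdot y)$; applying part~(1) or part~(2) to simplify both sides of axiom~3 should force $x\cdot x$ and $y\cdot y$ to coincide. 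I anticipate part~(3) to be the most delicate, since it requires carefully reducing both sides of the more complicated axiom~3 using the already-proven parts~(1) and~(2), and the main obstacle will be tracking the nested products without a convenient associativity or commutativity law to fall back on.
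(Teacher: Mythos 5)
The paper does not prove this lemma itself---it defers to Hardegree \cite{hardegree1}---so your proposal has to be judged on whether it would actually go through, and as written it would not. The decisive problem is part~(2). You plan to obtain $x\cdot x=(x\cdot y)\cdot(x\cdot y)$ from part~(1) together with axioms~1 and~2 alone (``applying axiom~2 or axiom~1 to fold the expression back to $x\cdot x$''), reserving axiom~3 for part~(3). This cannot work: the right-projection magma $x\cdot y:=y$ on a two-element set satisfies axioms~1 and~2 and identity~(1), yet $(x\cdot y)\cdot(x\cdot y)=y\neq x=x\cdot x$ when $x\neq y$. So part~(2) genuinely requires axiom~3, and the substitution that makes it work is $y\mapsto x\cdot z$ in axiom~3: then $x\cdot(x\cdot z)=x\cdot z$ by part~(1) and $(x\cdot z)\cdot x=x$ by axiom~1, so the left side of axiom~3 collapses to $x\cdot x$ and the right side to $(x\cdot z)\cdot(x\cdot z)$. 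Your plan for part~(3) has the mirror-image defect: you propose to ``reduce both sides of axiom~3'' for general $x,y$ using parts~(1)--(2), but the terms $(x\cdot y)\cdot(y\cdot x)$ do not simplify for arbitrary arguments; the working route is to combine part~(2) (with $x$ and $y$ exchanged) with an axiom-2 computation, e.g.\ $x\cdot x=(x\cdot y)\cdot(x\cdot y)=(y\cdot x)\cdot(y\cdot y)=y\cdot y$, where the last step uses axiom~2 with appropriate instances of axiom~1 and part~(1).

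Part~(1) is also left unresolved in your write-up---you explicitly say you have not found the substitution---though here the fix is easy and does not need axiom~2 at all: axiom~1 with $x\cdot y$ in the first slot gives $((x\cdot y)\cdot x)\cdot(x\cdot y)=x\cdot y$, and a second application of axiom~1 replaces $(x\cdot y)\cdot x$ by $x$, yielding $x\cdot(x\cdot y)=x\cdot y$. In short, your ordering of the three parts and your instinct that axiom~3 is the bridge between $x\cdot x$ and $y\cdot y$ are sound, but the proposal misallocates the axioms: axiom~3 must already be spent on part~(2), and without identifying the substitution $y\mapsto x\cdot z$ the argument does not close.
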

\begin{definition}\label{top element}
    Let $A$ be a quasi-implication algebra. Then define a constant $1\in A$ by $1:=x\cdot x$. 
\end{definition}
Note that by virtue of condition 3 of Lemma \ref{lemma}, the constant element $1$ is well-defined in any quasi-implication algebra. 
\begin{proposition}[\cite{hardegree1}]\label{top element lemma}
    Every quasi-implication algebra $A$ satisfies $1\cdot x=x$ and $x\cdot 1=1$ for all $x\in A$. 
\end{proposition}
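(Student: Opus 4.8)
The plan is to exploit the fact that, by Lemma \ref{lemma}(3), the constant $1$ may be written as $z\cdot z$ for \emph{any} element $z$, and in particular as $x\cdot x$ for the very element $x$ appearing in the expressions $1\cdot x$ and $x\cdot 1$. This observation lets each identity collapse onto a single application of a law already in hand, so I would split the proof into the two claimed equalities and handle each separately.

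For the first identity $1\cdot x=x$, I would substitute $1=x\cdot x$ (Definition \ref{top element}) to obtain $1\cdot x=(x\cdot x)\cdot x$, and then apply condition 1 of Definition \ref{quasi-implication algebra} with $y:=x$, which gives $(x\cdot x)\cdot x=x$ immediately. For the second identity $x\cdot 1=1$, I would again write $1=x\cdot x$, so that $x\cdot 1=x\cdot(x\cdot x)$; applying Lemma \ref{lemma}(1) with $y:=x$ yields $x\cdot(x\cdot x)=x\cdot x$, and $x\cdot x=1$ once more by Definition \ref{top element}, completing the argument.

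The only genuinely delicate point is the legitimacy of representing $1$ by $x\cdot x$ for the \emph{same} $x$ occurring in the ambient expression, rather than by an arbitrary $z\cdot z$; this is precisely what the well-definedness established in Lemma \ref{lemma}(3) guarantees, ensuring that $1$ is independent of the chosen element. Once this is recognized, no further computation is required, and in particular no appeal to conditions 2 or 3 of Definition \ref{quasi-implication algebra} is needed.
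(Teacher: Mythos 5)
Your proof is correct. The paper itself gives no proof of this proposition (it is cited directly from Hardegree), but your two one-line derivations --- $(x\cdot x)\cdot x=x$ by axiom 1 of Definition \ref{quasi-implication algebra}, and $x\cdot(x\cdot x)=x\cdot x=1$ by Lemma \ref{lemma}(1) --- are exactly the standard argument, and you are right that Lemma \ref{lemma}(3) is what licenses instantiating $1$ as $x\cdot x$ for the particular $x$ at hand.
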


The following shows that every Boolean implication algebra is a quasi-implication algebra and that the equational theory of quasi-implication algebras is satisfied by the operation of Sasaki implication in an orthomodular lattice. 
 
\begin{lemma}[\cite{hardegree1}]\label{implication is quasi-imlpication}
   Every Boolean implication algebra is a quasi-implication algebra. Moreover, if $A$ is an orthomodular lattice, then $\langle A;p_s\rangle$ is a quasi-implication algebra.   
\end{lemma}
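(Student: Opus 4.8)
The plan is to prove the two assertions by different means. The first (\say{every Boolean implication algebra is a quasi-implication algebra}) is a purely equational claim, so I would verify that each of the three axioms of Definition \ref{quasi-implication algebra} is derivable from the three Boolean implication axioms of Section \ref{sec:intro}, namely contraction $(x\cdot y)\cdot x=x$, quasi-commutativity $(x\cdot y)\cdot y=(y\cdot x)\cdot x$, and exchange $x\cdot(y\cdot z)=y\cdot(x\cdot z)$. The second assertion is a concrete identity-checking problem for $p_s(x,y)=x^{\perp}\vee(x\wedge y)$ in an arbitrary orthomodular lattice, which I would settle by direct lattice-theoretic computation using the orthomodular law of Definition \ref{oml} together with the standard commutativity machinery.

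For the first assertion, the first quasi-implication axiom $(x\cdot y)\cdot x=x$ is literally contraction, so there is nothing to prove. For the second axiom $(x\cdot y)\cdot(x\cdot z)=(y\cdot x)\cdot(y\cdot z)$, I would push factors around using exchange, together with the Boolean analogues of Lemma \ref{lemma}(1) and Proposition \ref{top element lemma}, until both sides reach a normal form that is manifestly symmetric in $x$ and $y$. For the third axiom, the key observation is that each of the two nested expressions reduces to the join $x\oplus y:=(x\cdot y)\cdot y$: one shows $((x\cdot y)\cdot(y\cdot x))\cdot x=(x\cdot y)\cdot y$ using exchange and contraction, and then quasi-commutativity supplies exactly the symmetry $x\oplus y=y\oplus x$ that the two sides demand. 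In the presence of a bottom element one could alternatively invoke Abbott's representation recalled in the introduction, noting that the associated Boolean algebra is distributive, hence $p_c=p_s$ and the algebra is orthomodular, so that the first assertion drops out of the second; but the equational route above handles the general case uniformly.

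For the second assertion I would check the three axioms directly for $p_s$. The guiding principle is that whenever the relevant elements pairwise commute, the Foulis--Holland theorem (which makes three elements generate a distributive sublattice as soon as one commutes with the other two) lets me compute distributively, after which the orthomodular law closes the remaining gaps. As a template for the first axiom, set $u=p_s(x,y)$; since $x^{\perp}$ and $x\wedge y$ both commute with $x$, I get $u\wedge x=(x^{\perp}\wedge x)\vee(x\wedge y)=x\wedge y$, and therefore $p_s(u,x)=u^{\perp}\vee(x\wedge y)=(x\wedge(x\wedge y)^{\perp})\vee(x\wedge y)$, which equals $x$ by the orthomodular law because $x\wedge y\leq x$. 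The remaining two axioms follow the same recipe but with heavier bookkeeping, and here it is convenient to exploit the Sasaki adjunction $x\wedge(x^{\perp}\vee b)\leq c\iff b\leq p_s(x,c)$, which exhibits $p_s(x,-)$ as a residual and turns several of the inequalities one needs into trivialities.

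The main obstacle I anticipate is the third quasi-implication axiom in the orthomodular setting: it is a genuine three-variable identity whose two sides are expressions of nesting depth three in $p_s$, and there is no single sublattice on which all the participating elements commute. The work will consist of isolating, at each stage of the reduction, a pair of elements that \emph{do} commute so that Foulis--Holland applies, and then using orthomodularity to absorb the residual meets. I expect essentially all of the effort to concentrate there, the first two axioms and the entire first assertion being comparatively routine.
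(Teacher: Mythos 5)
The paper does not actually prove this lemma: it is imported verbatim from Hardegree \cite{hardegree1}, so there is no in-paper argument to compare yours against. Judged on its own terms, your plan is sound and the parts you carry out are correct. The verification of axiom 1 for $p_s$ is complete and right: $p_s(x,y)\wedge x=x\wedge y$ by Foulis--Holland (since $x^{\perp}$ and $x\wedge y$ both commute with $x$), and then $(x\wedge(x\wedge y)^{\perp})\vee(x\wedge y)=x$ is exactly the orthomodular law applied to $x\wedge y\leq x$. Your identification of the Sasaki adjunction $x\wedge(x^{\perp}\vee b)\leq c\iff b\leq p_s(x,c)$ and of Foulis--Holland as the engines for axioms 2 and 3 matches what Hardegree's computation actually uses. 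For the Boolean half, your reduction of axiom 3 can be shortened: in any implication algebra one has $(x\cdot y)\cdot(y\cdot x)=y\cdot x$, so $((x\cdot y)\cdot(y\cdot x))\cdot x=(y\cdot x)\cdot x$ and axiom 3 collapses immediately to quasi-commutativity; your observation that the representation-theoretic shortcut only covers the bounded case is also correct and worth having made explicit.

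The one substantive reservation is that axioms 2 and 3 for $p_s$ in an orthomodular lattice --- which you yourself flag as the locus of essentially all the effort --- are left as a strategy rather than a computation. For axiom 2, $(x\cdot y)\cdot(x\cdot z)$ with $\cdot=p_s$ is a nesting-depth-two expression in which $p_s(x,y)$ need not commute with $p_s(x,z)$, and the reduction to a form symmetric in $x$ and $y$ is not a single application of Foulis--Holland but a chain of commutation arguments (one repeatedly uses that $x$ commutes with $p_s(x,y)$, which itself must be established). For axiom 3 the same caveat applies with another layer of nesting. None of this invalidates your approach --- it is the approach that works --- but as written the proposal is a correct outline with the decisive computations deferred, not yet a proof.
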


Although the following was not explicitly stated by Hardegree in \cite{hardegree1}, we show that quasi-implication algebras are proper generalizations of Boolean implication algebras. 

\begin{proposition}
    There exist quasi-implication algebras that are not Boolean implication algebras. 
\end{proposition}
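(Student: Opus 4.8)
The plan is to produce a single explicit quasi-implication algebra that violates one of the three defining equations of a Boolean implication algebra. By Lemma \ref{implication is quasi-imlpication}, whenever $A$ is an orthomodular lattice the magma $\langle A;p_s\rangle$ is automatically a quasi-implication algebra; hence it suffices to exhibit a \emph{non-distributive} orthomodular lattice on which the Sasaki implication $p_s$ fails a Boolean implication axiom. The most economical target is quasi-commutativity, the equation $(x\cdot y)\cdot y=(y\cdot x)\cdot x$ from the introduction, since refuting a universally quantified equation requires only one pair of witnesses.

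First I would fix the smallest non-Boolean orthomodular lattice, $MO_2$, with universe $\{0,1,a,a^{\perp},b,b^{\perp}\}$ in which $a,a^{\perp},b,b^{\perp}$ are pairwise incomparable atoms (equivalently coatoms), so that $p\wedge q=0$ and $p\vee q=1$ for any two distinct elements among them, with the evident orthocomplementation fixing the bounds and swapping each atom with its partner. I would check that this really is an orthomodular lattice: it has height $2$, so it contains no three-element chain and in particular no copy of the Benzene ring, whence orthomodularity follows from Proposition \ref{oml characterization}(4); and it is non-Boolean since $a$ has the three distinct complements $a^{\perp},b,b^{\perp}$, so complements are not unique. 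With the lattice in hand the remaining step is the direct computation of $p_s(x,y)=x^{\perp}\vee(x\wedge y)$ on the witnesses $x=a$, $y=b$. I would compute $p_s(a,b)=a^{\perp}\vee(a\wedge b)=a^{\perp}$ and then $p_s(a^{\perp},b)=a\vee(a^{\perp}\wedge b)=a$, so $(a\cdot b)\cdot b=a$; symmetrically $p_s(b,a)=b^{\perp}$ and $p_s(b^{\perp},a)=b$, so $(b\cdot a)\cdot a=b$. Since $a\neq b$, quasi-commutativity fails, and therefore $\langle MO_2;p_s\rangle$ is a quasi-implication algebra that is not a Boolean implication algebra.

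I do not anticipate a genuine obstacle, as the argument is essentially computational; the only points requiring care are the verification that $MO_2$ is orthomodular rather than a mere ortholattice, and the observation that quasi-commutativity, being one of the defining axioms of a Boolean implication algebra, must hold identically in every such algebra, so that its single failure genuinely separates the two classes. One could alternatively give a purely structural argument, invoking the stated correspondences of bounded Boolean implication algebras with Boolean algebras and of bounded quasi-implication algebras with orthomodular lattices, together with the existence of non-distributive orthomodular lattices; but the explicit $MO_2$ witness is cleaner, self-contained, and avoids appealing to the bounded correspondence.
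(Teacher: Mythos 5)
Your proposal is correct and is essentially identical to the paper's own proof: the paper uses exactly the lattice $MO_2$ (drawn as the six-element orthomodular lattice with four pairwise incomparable atoms) and refutes quasi-commutativity via the same computation $p_s(p_s(x,y),y)=x\neq y=p_s(p_s(y,x),x)$. Your additional remarks on verifying orthomodularity of $MO_2$ via Proposition \ref{oml characterization}(4) are a harmless elaboration of what the paper leaves implicit.
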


\begin{proof}
 To construct an example of a quasi-implication algebra that is not an implication algebra, consider the orthomodular lattice: 
    \par
\vspace{.3cm}
    \begin{center}
    \begin{tikzpicture}
  \node (a) at (0,1) [label=above:$1$] {$\bullet$};
  \node (b) at (2,0) [label=right:$y$] {$\bullet$};
  \node (c) at (-2,0) [label=left:$x$] {$\bullet$};
  \node (d) at (0,-1) [label=below:$0$] {$\bullet$};
  \node (i) at (.5,0) [label=right:$y^{\perp}$] {$\bullet$};
  \node (j) at (-.5,0) [label=left:$x^{\perp}$] {$\bullet$};
  \draw  (i) -- (a) (j) -- (a) (i) -- (d) (j) -- (d) (d) -- (b) (d) -- (c) (b) -- (a) (c) -- (a);
  \draw[preaction={draw=white, -,line width=6pt}];
\end{tikzpicture}
\end{center}

Its corresponding quasi-implication algebra is given by the table below: 

\par
\vspace{.3cm}

\begin{table}[htbp]
\centering
\begin{tabular}{|c|c|c|c|c|c|c|c|c|c|c|c|c|c|c|c|c|}
\hline
$p_s$ & $x$ & $x^{\perp}$ & $y$ & $y^{\perp}$ & $0$ & $1$ \\
\hline
$x$ & 1 & $x^{\perp}$ & $x^{\perp}$ & $x^{\perp}$ & $x^{\perp}$ & 1 \\
\hline
$x^{\perp}$ & $x$ & 1 & $x$ & $x$ & $x$ & 1 \\
\hline
$y$ & $y^{\perp}$ & $y^{\perp}$ & 1 & $y^{\perp}$ & $y^{\perp}$ & 1 \\
\hline
$y^{\perp}$ & $y$ & $y$ & $y$ & 1 & $y$ & 1 \\
\hline
0 & 1 & 1 & 1 & 1 & 1 & 1 \\
\hline
1 & $x$ & $x^{\perp}$ & $y$ & $y^{\perp}$ & 0 & 1 \\
\hline
\end{tabular}
\end{table}
This is indeed a quasi-implication algebra by part 2 of Lemma \ref{implication is quasi-imlpication}. To see that it is not a Boolean implication algebra, note that $p_s(p_s(x,y),y)=p_s(x^{\perp},y)=x$ but $p_s(p_s(y,x),x)=p_s(y^{\perp},x)=y$. Therefore we find that: \[p_s(p_s(x,y),y)\not=p_s(p_s(y,x),x)\] and hence the quasi-commutativity equation that is characteristic of Boolean implication algebras is not satisfied. 
\end{proof}

\begin{lemma}[\cite{hardegree1}]\label{partial order}
If $A$ is a quasi-implication algebra, then $\preceq\subseteq A^2$ defined by $x\preceq y$ iff $x\cdot y=1$ is a partial order.      
\end{lemma}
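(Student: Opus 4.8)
The plan is to verify the three defining properties of a partial order—reflexivity, antisymmetry, and transitivity—for the relation $\preceq$, using the axioms of Definition \ref{quasi-implication algebra} together with the identities supplied by Lemma \ref{lemma} and Proposition \ref{top element lemma}. Reflexivity is immediate: by Definition \ref{top element} we have $x\cdot x=1$, so $x\preceq x$ holds for every $x\in A$.

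For antisymmetry, I would assume $x\preceq y$ and $y\preceq x$, that is $x\cdot y=1=y\cdot x$, and feed these two equalities directly into axiom 3 of Definition \ref{quasi-implication algebra}. The left-hand side collapses as $((x\cdot y)\cdot(y\cdot x))\cdot x=(1\cdot 1)\cdot x=1\cdot x=x$, and the right-hand side as $((y\cdot x)\cdot(x\cdot y))\cdot y=(1\cdot 1)\cdot y=1\cdot y=y$, each simplification relying on the identity $1\cdot x=x$ from Proposition \ref{top element lemma}. Since axiom 3 equates the two sides, we obtain $x=y$.

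For transitivity, I would assume $x\preceq y$ and $y\preceq z$, i.e.\ $x\cdot y=1$ and $y\cdot z=1$, and exploit axiom 2, namely $(x\cdot y)\cdot(x\cdot z)=(y\cdot x)\cdot(y\cdot z)$. Substituting $x\cdot y=1$ into the left-hand side gives $1\cdot(x\cdot z)=x\cdot z$, while substituting $y\cdot z=1$ into the right-hand side gives $(y\cdot x)\cdot 1=1$; both steps again invoke the evaluation rules $1\cdot x=x$ and $x\cdot 1=1$ from Proposition \ref{top element lemma}. Equating the two sides yields $x\cdot z=1$, that is $x\preceq z$, as required.

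The main obstacle is purely one of recognizing the correct instantiations rather than of genuine difficulty: antisymmetry falls out of axiom 3 almost verbatim once both hypotheses are set equal to $1$, and transitivity hinges on seeing that axiom 2 is exactly the identity linking $x\cdot z$ to $y\cdot z$ under the assumption $x\cdot y=1$. Beyond the two evaluation rules $1\cdot x=x$ and $x\cdot 1=1$, no further structural facts about the algebra are needed.
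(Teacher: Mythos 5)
Your proof is correct: reflexivity follows from the definition of $1$, antisymmetry falls out of axiom 3 of Definition \ref{quasi-implication algebra} under the two hypotheses, and transitivity is exactly the right instantiation of axiom 2 together with $1\cdot x=x$ and $x\cdot 1=1$ from Proposition \ref{top element lemma}. The paper itself cites this lemma to Hardegree without proof, and your argument is the standard direct verification, so there is nothing to add.
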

Notice that for any quasi-implication algebra $A$, we have $x\cdot 1=1$ for all $x\in A$ by Proposition \ref{top element lemma}, and hence $x\preceq 1$ for all $x\in A$ by Lemma \ref{partial order}. Hence the constant element $1\in A$ is the greatest element in the poset $\langle A;\preceq\rangle$.     

\begin{definition}\label{bottom element}
    A \emph{bounded quasi-implication algebra} is a quasi-implication algebra $A$ equipped with a constant $0\in x$ defined by $0\cdot x=1$ for all $x\in A$. 
\end{definition}
By Lemma \ref{partial order}, it is clear that $0$ is the least element in the poset $\langle A;\preceq\rangle$ for any bounded quasi-implication algebra $A$. Moreover, since for any orthomodular lattice $A$ and any $x\in A$: \[p_s(0,x)=0^{\perp}\vee(0\wedge x)=1\vee(0\wedge x)=1\vee 0=1\]  it is obvious that $\langle A;p_s\rangle$ further induces a bounded quasi-implication algebra.

Conversely to that of Lemma \ref{implication is quasi-imlpication}, the following result shows that every bounded quasi-implication algebra induces an orthomodular lattice. 

\begin{lemma}[\cite{hardegree1}]\label{quasi-implication algebra is an orthomodular lattice}
    If $A$ is a bounded quasi-implication algebra, $\langle A;\oplus,\odot,^*,0,1\rangle$ forms an orthomodular lattice where $x^*:=x\cdot 0$ is the operation of orthogonal complementation, $x\oplus y:=((x\cdot y)\cdot(y\cdot x))\cdot x$ is the operation of least upper bound, and $x\odot y:=(x^*\oplus y^*)^*$ is the operation of greatest lower bound. 
\end{lemma}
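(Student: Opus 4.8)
The plan is to work throughout with the partial order $\preceq$ supplied by Lemma \ref{partial order}, for which $1$ is already the greatest and $0$ the least element, and to verify the ortholattice data in the order: orthocomplementation $^*$, join $\oplus$, meet $\odot$, and finally the orthomodular law. The only tools needed are the magma identities of Lemma \ref{lemma}, the equalities $1\cdot x = x$ and $x\cdot 1 = 1$ from Proposition \ref{top element lemma}, and the defining equation $0\cdot x = 1$. The orthocomplement facts I expect to be quick, the join and orthomodularity to be the real work.

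First I would dispatch the orthocomplementation. The involution law $x^{**}=x$ falls straight out of condition (3) of Definition \ref{quasi-implication algebra} on setting $y=0$: its left-hand side collapses via $0\cdot x=1$ and $u\cdot 1=1$ to $((x\cdot 0)\cdot 1)\cdot x = 1\cdot x = x$, while its right-hand side collapses via $1\cdot u = u$ to $(1\cdot(x\cdot 0))\cdot 0 = (x\cdot 0)\cdot 0 = x^{**}$, so that $x = x^{**}$. Next, condition (1) of Definition \ref{quasi-implication algebra} with $y=0$ gives $x^*\cdot x = x$, and Lemma \ref{lemma}(1) with $y=0$ gives $x\cdot x^* = x^*$; substituting both into the definition of $\oplus$ collapses $x\oplus x^*$ to $x\cdot x = 1$, after which the involution law forces $x\odot x^* = (x^*\oplus x)^* = 1^* = 0$. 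For antitonicity, condition (2) of Definition \ref{quasi-implication algebra} with $z=0$ reads $(x\cdot y)\cdot x^* = (y\cdot x)\cdot y^*$, which under the hypothesis $x\cdot y = 1$ expresses $x^*$ as $(y\cdot x)\cdot y^*$; from this $y^*\preceq x^*$ follows after a short reduction, so $^*$ is an order-reversing involution and a complementation.

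The substantive step is to show that $x\oplus y = ((x\cdot y)\cdot(y\cdot x))\cdot x$ is the least upper bound of $x$ and $y$ for $\preceq$. Commutativity $x\oplus y = y\oplus x$ is immediate from condition (3), so it suffices to prove the upper-bound claim $x\preceq x\oplus y$ (the bound for $y$ then following by symmetry) together with the universal property that $x\preceq z$ and $y\preceq z$ imply $x\oplus y\preceq z$. Writing $p = (x\cdot y)\cdot(y\cdot x)$, the upper-bound claim is the identity $x\cdot(p\cdot x)=1$, and the least-upper-bound claim requires deriving $(p\cdot x)\cdot z = 1$ from $x\cdot z = 1$ and $y\cdot z = 1$. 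I expect this to be the main obstacle: unlike the orthocomplement identities, these do not reduce to a single substitution but require combining all three axioms with the derived identities of Lemma \ref{lemma}, and they are precisely where the symmetrized form of $p$ (rather than an arbitrary term) is essential. Once the join is established, the meet clause is routine: since $^*$ is an order-reversing involution, the image under $^*$ of a supremum is the infimum of the images, so $x\odot y = (x^*\oplus y^*)^*$ is forced to be the meet, and together with the complement and involution laws already obtained this exhibits $\langle A;\odot,\oplus,^*,0,1\rangle$ as an ortholattice.

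It remains to verify the orthomodular law, for which I would appeal to the equivalent conditions of Proposition \ref{oml characterization}. The route I would attempt is to first establish, from the quasi-implication axioms and the definitions of $\odot$ and $^*$, the magma identity $x\odot(x\cdot y)\preceq y$ in the constructed lattice -- the algebraic form of modus ponens -- and then to exclude the Benzene ring via Proposition \ref{oml characterization}(4): on any orthocomplement-closed sublattice isomorphic to the Benzene ring the term $x\odot(x\cdot y)$ computes to $x\not\preceq y$, as observed after Proposition \ref{oml characterization}, so modus ponens fails there; its validity therefore rules out the Benzene ring and yields orthomodularity. This is the second place where the full strength of the axioms, and not merely the ortholattice structure, is doing work, and I expect verifying the modus ponens identity to be comparable in difficulty to the join step. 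Assembling the four parts -- orthocomplementation, join, meet, and orthomodularity -- then completes the proof.
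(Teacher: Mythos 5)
You should first note that the paper offers no proof of this lemma at all: it is quoted from Hardegree \cite{hardegree1}, where it is established by a long chain of purely equational derived identities (roughly thirty numbered lemmas) culminating in a direct verification of the orthomodular law $x\preceq y\Rightarrow y=x\oplus(x^*\odot y)$, with no appeal to the Benzene-ring characterization. Your decomposition (complementation, join, meet, orthomodularity) is reasonable, and the orthocomplement computations you actually carry out --- the specializations $y=0$ in axioms (1) and (3) and $z=0$ in axiom (2) --- are correct and are essentially the opening moves of Hardegree's development (though the ``short reduction'' giving $y^*\preceq x^*$ from $x^*=(y\cdot x)\cdot y^*$ is itself a multi-step derivation, since $v\preceq u\cdot v$ is \emph{not} a valid identity in general). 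You are also right that the least-upper-bound property of $\oplus$ is the substantive part; but you only announce what must be proved there, and that is precisely where the bulk of Hardegree's lemma chain lives, so the central step of the proof is absent rather than merely compressed.

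The orthomodularity step contains a genuine error of approach, not just an omission. The observation after Proposition \ref{oml characterization} concerns the \emph{lattice polynomial} $p_c(x,y)=x^\perp\vee y$, whose value on a Benzene subalgebra is forced by the sublattice structure (the join of $x^\perp$ and $y$ inside the hexagon is $1$); that is why modus ponens for $p_c$ visibly fails there. In your setting $x\cdot y$ is the abstract magma operation of $A$: its value on a pair of elements of a purported Benzene subortholattice is not determined by that subalgebra and need not even lie in it, so the claim that ``$x\odot(x\cdot y)$ computes to $x$'' is unjustified. Worse, for the comparable pair $x\preceq y$ of the hexagon one has $x\cdot y=1$ by the very definition of $\preceq$, so modus ponens is vacuously satisfied and yields no contradiction; any contradiction must come from the reverse-comparable pair, and for that you need a lower bound on $x\cdot y$ expressed in the constructed lattice operations. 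Concretely, the argument can be repaired by first proving $x^*\preceq x\cdot y$ (immediate from axiom (2): $(x\cdot 0)\cdot(x\cdot y)=(0\cdot x)\cdot(0\cdot y)=1$) and $x\odot y\preceq x\cdot y$, whence $x^*\oplus(x\odot y)\preceq x\cdot y$; on a Benzene pair with $y\prec x$ this forces $x\cdot y\succeq x^*\oplus y=1$, hence $x\preceq y$, collapsing the hexagon --- and at that point modus ponens is no longer needed. Either supply that missing lower bound, or follow Hardegree and verify the orthomodular identity directly.
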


The proceeding lemma will be exploited within the constructions and proofs throughout the remainder of this paper. 
\begin{lemma}[\cite{hardegree1}]\label{meets}
    If $A$ is a bounded quasi-implication algebra, then: \[(x^*\oplus y^*)^*=((x\cdot y)\cdot(x\cdot 0))\cdot 0\] so that $x\odot y=((x\cdot y)\cdot(x\cdot 0))\cdot 0$. 
\end{lemma}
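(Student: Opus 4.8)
The plan is to reduce the asserted identity to a short computation with Sasaki implication and then to let orthomodularity do the decisive work. By Lemma \ref{quasi-implication algebra is an orthomodular lattice} the bounded quasi-implication algebra $A$ carries an orthomodular lattice structure $\langle A;\oplus,\odot,^*,0,1\rangle$ in which $\oplus$ is the join, $\odot$ is the meet, and $^*$ is the orthocomplement; moreover, by the correspondence with Lemma \ref{implication is quasi-imlpication} the magma operation $\cdot$ coincides on this lattice with the Sasaki implication, so that $x\cdot y=x^*\oplus(x\odot y)$. I would first record the two facts I will use throughout: $x\cdot 0=p_s(x,0)=x^*$, so that the outer ``$\cdot\,0$'' appearing in the claimed formula is precisely the orthocomplement $[\,\cdot\,]^*$; and that $x\odot y=(x^*\oplus y^*)^*$ holds by definition. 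Granting these, the lemma reduces to the single identity
\[(x\cdot y)\cdot(x\cdot 0)=x^*\oplus y^*,\]
since complementing both sides then yields $((x\cdot y)\cdot(x\cdot 0))\cdot 0=(x^*\oplus y^*)^*=x\odot y$, which is exactly the assertion.

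To establish this reduced identity I would switch to lattice notation, writing $\wedge,\vee,{}^{\perp}$ for $\odot,\oplus,{}^{*}$, and set $u:=x\cdot y=x^{\perp}\vee(x\wedge y)$. Since $u\ge x^{\perp}$ I immediately obtain $u\wedge x^{\perp}=x^{\perp}$, while De Morgan gives $u^{\perp}=x\wedge(x^{\perp}\vee y^{\perp})$. Hence
\[(x\cdot y)\cdot(x\cdot 0)=p_s(u,x^{\perp})=u^{\perp}\vee(u\wedge x^{\perp})=\bigl(x\wedge(x^{\perp}\vee y^{\perp})\bigr)\vee x^{\perp}.\]
The final simplification is where orthomodularity enters: applying the orthomodular law of Definition \ref{oml} to the inequality $x^{\perp}\le x^{\perp}\vee y^{\perp}$ gives $x^{\perp}\vee y^{\perp}=x^{\perp}\vee\bigl(x\wedge(x^{\perp}\vee y^{\perp})\bigr)$, so the displayed expression collapses to $x^{\perp}\vee y^{\perp}=x^{*}\oplus y^{*}$, as required.

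The one genuinely nontrivial step is this orthomodular collapse $\bigl(x\wedge(x^{\perp}\vee y^{\perp})\bigr)\vee x^{\perp}=x^{\perp}\vee y^{\perp}$: in a general ortholattice the left-hand side need only be $\le$ the right-hand side, and the equality is precisely an instance of the orthomodular identity (equivalently, of the Benzene-ring exclusion in Proposition \ref{oml characterization}). I expect this to be the crux, with everything else amounting to bookkeeping via $x\cdot 0=x^{*}$ and the definition of $\odot$. If one wished to avoid appealing to the fact that $\cdot$ acts as Sasaki implication on the induced lattice, the same reduction could in principle be carried out directly from the magma axioms of Definition \ref{quasi-implication algebra} together with Lemma \ref{lemma} and Proposition \ref{top element lemma}; this is considerably more laborious and in the end re-derives the identical orthomodular step in syntactic disguise, so I would favor the lattice-theoretic route sketched above.
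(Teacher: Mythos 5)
The paper does not actually prove this lemma; it is imported verbatim from Hardegree \cite{hardegree1}, so there is no in-paper argument to match yours against. Judged on its own terms, your lattice-theoretic computation is correct: the reduction to $(x\cdot y)\cdot(x\cdot 0)=x^{*}\oplus y^{*}$ via $z\cdot 0=z^{*}$ and the definition $x\odot y=(x^{*}\oplus y^{*})^{*}$ is sound, the De Morgan and absorption steps check out, and you have correctly isolated the orthomodular identity $x^{\perp}\vee y^{\perp}=x^{\perp}\vee\bigl(x\wedge(x^{\perp}\vee y^{\perp})\bigr)$ as the step that fails in a general ortholattice.

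The one point you should repair is the justification of your key premise that $\cdot$ coincides with Sasaki implication on the induced orthomodular lattice, i.e.\ $x\cdot y=x^{*}\oplus(x\odot y)$. Lemma \ref{implication is quasi-imlpication} does not give you this: it goes in the opposite direction (from an orthomodular lattice to a quasi-implication algebra) and says nothing about recovering the original magma operation from the lattice built out of it. The fact you need is Hardegree's Lemma 30, which appears in this paper only later, as the round-trip theorem $\mathfrak{A}=\widehat{\mathfrak{A}}$ for monadic quasi-implication algebras. That fact is at least as deep as Lemma \ref{meets} itself, and in Hardegree's own development the explicit formula for $\odot$ is part of the machinery leading up to it, so taking the Sasaki-recovery identity as an input risks circularity if the goal is a from-scratch proof within the equational theory. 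Your argument is therefore best read as a verification that the two formulas for the meet agree \emph{given} the full correspondence between bounded quasi-implication algebras and orthomodular lattices, rather than as an independent derivation; the purely syntactic route you mention in your last paragraph (working from Definition \ref{quasi-implication algebra}, Lemma \ref{lemma}, and Hardegree's auxiliary lemmas) is what a self-contained proof would require.
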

\section{Orthogonality relations arising from bounded quasi-implication algebras}
In this section, we investigate the basic theory of orthogonality relations arising from quasi-implication algebras. 
\begin{definition}
    If $X$ is a set, an \emph{orthogonality relation} on $X$ is a binary relation $\perp$ that is irreflexive and symmetric. 
\end{definition}
We call $\langle X;\perp\rangle$ an \emph{orthoframe} whenever $\perp$ is an orthogonality relation on $X$. Orthogonality relations are special examples of the polarities described by Birkhoff \cite{birkhoff2}.

For any orthoframe $X$ and any subset $U\subseteq X$, let:  \[U^{\perp}=\{x\in X:x\perp U\}=\{x\in X:x\perp y\hspace{.1cm}\text{for all}\hspace{.1cm}y\in U\}\] For any orthoframe $X$, we call $U\subseteq X$ \emph{bi-orthogonally closed} if $U=U^{\perp\perp}$ and denote the set of all bi-orthogonally closed subsets of $X$ by $\mathcal{B}(X)$, i.e., \[\mathcal{B}(X)=\{U\in\wp(X):U=U^{\perp\perp}\}.\] 

Orthogonality relations play an important role in the representation theory of general ortholattices. 

\begin{theorem}[\protect{\cite{birkhoff2}}]\label{complete ol}
    If $X$ is an orthoframe, then $\langle\mathcal{B}(X);\cap,\sqcup,^{\perp},\emptyset,X\rangle$ is a complete ortholattice where $U\sqcup V:=(U\cup V)^{\perp\perp}$ for all $U,V\in\mathcal{B}(X)$. 
\end{theorem}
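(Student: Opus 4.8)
The plan is to recognize this as the standard fact that the Galois-closed sets of a polarity form a complete ortholattice, so the proof proceeds in two stages: first establishing that $U\mapsto U^{\perp\perp}$ is a closure operator whose fixed points are exactly $\mathcal{B}(X)$, and then verifying that the inherited operations satisfy the ortholattice axioms, with irreflexivity and symmetry of $\perp$ supplying the distinctively \emph{ortho} features.

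First I would record the basic properties of the polar map $U\mapsto U^{\perp}$ on $\wp(X)$. The symmetry of $\perp$ yields the Galois connection $U\subseteq V^{\perp}\iff V\subseteq U^{\perp}$, from which it follows that $^{\perp}$ is order-reversing, that $U\subseteq U^{\perp\perp}$, and hence, by the usual short argument, that $U^{\perp}=U^{\perp\perp\perp}$ for every $U$. Consequently $U\mapsto U^{\perp\perp}$ is extensive, monotone, and idempotent, i.e. a closure operator, whose closed sets are precisely those $U$ with $U=U^{\perp\perp}$, namely the members of $\mathcal{B}(X)$. I would also note the De Morgan-type identity $(\bigcup_{i}U_{i})^{\perp}=\bigcap_{i}U_{i}^{\perp}$, which is immediate from the definition.

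Next, completeness. Since the fixed points of a closure operator on a power set form a closure system, I would check directly that $\bigcap_{i}U_{i}\in\mathcal{B}(X)$ whenever each $U_{i}\in\mathcal{B}(X)$: the inclusion $\bigcap_{i}U_{i}\subseteq(\bigcap_{i}U_{i})^{\perp\perp}$ is extensivity, while monotonicity of $^{\perp\perp}$ together with $(\bigcap_{i}U_{i})^{\perp\perp}\subseteq U_{i}^{\perp\perp}=U_{i}$ gives the reverse inclusion. Hence $\mathcal{B}(X)$ is a complete lattice with meet given by intersection and join given by $\bigsqcup_{i}U_{i}=(\bigcup_{i}U_{i})^{\perp\perp}$, the least closed set containing the union; in particular $\cap$ and $\sqcup$ restrict to well-defined binary operations. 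The top element is $X$, which is closed since $X\subseteq X^{\perp\perp}\subseteq X$, and the bottom element is $\emptyset$. Here irreflexivity enters for the first time: from $\emptyset^{\perp}=X$ and the fact that no point satisfies $x\perp x$, we get $X^{\perp}=\emptyset$, whence $\emptyset^{\perp\perp}=\emptyset$ and $\emptyset\in\mathcal{B}(X)$.

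Finally, the orthocomplementation. For $U\in\mathcal{B}(X)$ the map $U\mapsto U^{\perp}$ lands back in $\mathcal{B}(X)$ because $(U^{\perp})^{\perp\perp}=U^{\perp}$; it is order-reversing by the first paragraph; and it is an involution on $\mathcal{B}(X)$ since $U^{\perp\perp}=U$ for closed $U$. The two complementation laws are where the hypotheses on $\perp$ do the real work and constitute the main thing to verify. For $U\cap U^{\perp}=\emptyset$, any $x$ in the intersection would be orthogonal to every element of $U$, including itself, contradicting irreflexivity. For $U\sqcup U^{\perp}=X$, the De Morgan identity together with $U=U^{\perp\perp}$ gives $(U\cup U^{\perp})^{\perp}=U^{\perp}\cap U^{\perp\perp}=U^{\perp}\cap U=\emptyset$, so that $U\sqcup U^{\perp}=(U\cup U^{\perp})^{\perp\perp}=\emptyset^{\perp}=X$. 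I expect the correct chaining of the Galois facts with irreflexivity in these two identities to be the only genuinely delicate step; everything else is the routine verification that a closure system forms a complete lattice.
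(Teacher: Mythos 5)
Your proof is correct and is precisely the standard polarity argument that the paper defers to by citing Birkhoff rather than proving itself: the Galois connection from symmetry makes $(\cdot)^{\perp\perp}$ a closure operator whose closed sets form a complete lattice, and irreflexivity supplies $U\cap U^{\perp}=\emptyset$ (and hence, via the De Morgan identity, $U\sqcup U^{\perp}=X$). All the key steps are present and correctly chained, so nothing needs to be added.
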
 
For an ortholattice $A$, MacLaren \cite{maclaren} constructed an orthogonality relation $\perp\subseteq A\setminus\{0\}\times A\setminus\{0\}$ by defining $x\perp y\Leftrightarrow x\leq y^{\perp}$. MacLaren demonstrated that the induced complete ortholattice of bi-orthogonally closed subsets of $\langle A\setminus\{0\};\perp\rangle$ gives rise to the MacNeille completion of $A$ where the associated meet-dense and join-dense embedding is given by $\psi(x)=\{y\in A\setminus\{0\}:y\leq x\}$.

The definition of the partial-order structure $\preceq$ induced by a bounded quasi-implication algebra $A$ as well as the definition of complementation in $A$ suggests the following generalization of the above construction of an orthogonality relation to the setting of bounded quasi-implication algebras.  

\begin{definition}
    Let $A$ be a bounded quasi-implication algebra and define a binary relation $\perp^M_A$ on $A\setminus\{0\}$ by $x\perp^M_Ay\Leftrightarrow x\cdot (y\cdot 0)=1$.  
\end{definition}
For any bounded quasi-implication algebra $A$, we call $X^M_A=\langle A\setminus\{0\};\perp^M_A\rangle$ the \emph{MacLaren frame} of $A$.

\begin{proposition}
     If $A$ is a bounded quasi-implication algebra, then its MacLaren frame $X_A^M$ is an orthoframe.
\end{proposition}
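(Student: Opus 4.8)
The plan is to translate the defining condition of $\perp^M_A$ into the language of the orthomodular lattice that $A$ induces via Lemma \ref{quasi-implication algebra is an orthomodular lattice}, and then to verify irreflexivity and symmetry using standard orthocomplementation facts. Recall that in the induced orthomodular lattice $\langle A;\oplus,\odot,^*,0,1\rangle$ we have $y^*=y\cdot 0$, and by Lemma \ref{partial order} the equation $x\cdot z=1$ holds exactly when $x\preceq z$. Hence the condition $x\cdot(y\cdot 0)=1$ defining $x\perp^M_A y$ is precisely $x\preceq y^*$. So the first step is to record the identification
\[x\perp^M_A y \iff x\preceq y^*,\]
which reduces the whole statement to two familiar facts about the orthocomplementation $^*$.

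For irreflexivity, I would argue by contradiction: suppose $x\perp^M_A x$ for some $x\in A\setminus\{0\}$, which by the identification says $x\preceq x^*$. Since $\preceq$ is the order of the orthomodular lattice and $x\odot x^*=0$ by the orthocomplementation axiom, the inequality $x\preceq x^*$ forces $x=x\odot x^*=0$, contradicting $x\neq 0$. Thus no element of $A\setminus\{0\}$ is self-orthogonal.

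For symmetry, suppose $x\perp^M_A y$, i.e. $x\preceq y^*$. Applying the order-inverting involution $^*$ gives $y^{**}\preceq x^*$, and since $^*$ is an involution $y^{**}=y$, so $y\preceq x^*$, which is exactly $y\perp^M_A x$. Together irreflexivity and symmetry make $\perp^M_A$ an orthogonality relation, so $X^M_A$ is an orthoframe.

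The only point demanding any care is the identification of $\preceq$ with the order of the induced orthomodular lattice; once one checks, from the construction in Lemma \ref{quasi-implication algebra is an orthomodular lattice}, that $x\preceq y$ coincides with $x\odot y=x$, the orthocomplementation laws $x\odot x^*=0$ and $x=x^{**}$ together with order-inversion deliver both properties immediately. I do not expect a genuine obstacle here: the content is entirely a dictionary between the magma presentation and the orthomodular-lattice presentation, after which the result is the classical observation that $x\leq y^{\perp}$ is irreflexive and symmetric on the nonzero elements of any ortholattice.
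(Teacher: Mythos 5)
Your proof is correct, but it takes a genuinely different route from the paper's. The paper works entirely in the magma signature: for irreflexivity it uses the identity $x\cdot(x\cdot y)=x\cdot y$ from Lemma \ref{lemma}(1) to collapse $x\cdot(x\cdot 0)=1$ to $x\cdot 0=1$, and for symmetry it runs an explicit equational computation through several of Hardegree's lemmas (in particular the exchange-type identities and the involution law $(y\cdot 0)\cdot 0=y$) to derive $y\cdot(x\cdot 0)=1$ from $x\cdot(y\cdot 0)=1$. You instead pass immediately to the induced orthomodular lattice of Lemma \ref{quasi-implication algebra is an orthomodular lattice}, identify $x\perp^M_A y$ with $x\preceq y^*$ via Lemma \ref{partial order}, and then invoke the classical MacLaren observation that $x\leq y^{\perp}$ is irreflexive and symmetric on nonzero elements of any ortholattice. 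Your version is shorter and more conceptual, and it makes transparent that the proposition is literally MacLaren's construction transported along the equivalence of Lemma \ref{quasi-implication algebra is an orthomodular lattice}; the one point you rightly flag --- that $\preceq$ is the lattice order of the induced structure and that $x\mapsto x\cdot 0$ is an order-inverting involution with $x\odot x^*=0$ --- is exactly the content of that lemma, so nothing is missing. What the paper's computation buys in exchange is a proof that is self-contained in the quasi-implication signature and that exhibits precisely which of Hardegree's primitive identities are responsible for each frame condition, which is in keeping with its stated aim of generalizing the construction rather than reducing to the lattice case. Both arguments are sound; yours trades explicitness for economy.
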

\begin{proof}
    We first demonstrate that $\perp_A^M$ forms an irreflexive relation on $A\setminus\{0\}$. Hence, assume for the sake of contradiction that there exists $x\in A\setminus\{0\}$ such that $x\perp_{A}x$ so that $x\cdot(x\cdot 0)=1$. However $x\cdot(x\cdot 0)=x\cdot 0$ by Lemma \ref{lemma}(1), so $x\cdot 0=1$ and hence $x=0$, which contradicts our hypothesis that $x\in A\setminus\{0\}$.

   For symmetry, assume $x\perp_{A}y$ for $x,y\in A\setminus\{0\}$, so that $x\cdot(y\cdot 0)=1$. Notice that we have $0\cdot(((y\cdot 0)\cdot x)\cdot 0)=1$ since $0\cdot z=1$ for all $z\in A$. Therefore by \cite[Lemma 10]{hardegree1} we have the following: \[((y\cdot 0)\cdot 0)\cdot((y\cdot 0)\cdot(((y\cdot 0)\cdot x)\cdot 0))=1.\] By \cite[Lemma 6 and Q2]{hardegree1}, our hypothesis that $x\perp^M_Ay$, and Proposition \ref{top element lemma}, we have:
    \begin{align*}
        (y\cdot 0)\cdot(((y\cdot 0)\cdot x)\cdot 0)&=((y\cdot 0)\cdot x)\cdot((y\cdot 0)\cdot 0)\\&=(x\cdot(y\cdot 0))\cdot(x\cdot 0)\\&=1\cdot (x\cdot 0)=x\cdot 0
    \end{align*}
    Therefore $((y\cdot 0)\cdot 0)\cdot(x\cdot 0)=1$ but $((y\cdot 0)\cdot 0)=y$ by \cite[Lemma 26]{hardegree1} so $y\cdot(x\cdot 0)=1$ and thus we conclude $y\perp^M_{A}x$. Thus $X_A^M$ forms an orthoframe. 
\end{proof}

Recall that if $A$ is a bounded lattice, a non-empty subset $\alpha\subseteq A$ is a \emph{filter} if $\alpha$ is upwards closed and closed under finite meets. A filter $\alpha\subseteq A$ is \emph{proper} if $0\not\in \alpha$, i.e., $\alpha\not=A$. We denote by $\mathfrak{F}(A)$ the proper filters of $A$.  

For an ortholattice $A$, Goldblatt \cite{goldblatt} constructed an orthogonality relation $\perp\subseteq\mathfrak{F}(A)\times\mathfrak{F}(A)$ by defining $\alpha\perp\beta$ if and only if there exists $x\in A$ such that $x\in\alpha$ and $x^{\perp}\in\beta$. Harding, McDonald, and Peinado \cite{harding3} demonstrated that the induced complete ortholattice of bi-orthogonally closed subsets of $\langle \mathfrak{F}(A);\perp\rangle$ gives rise to the canonical completion and canonical extension of $A$ where the associated dense and compact embedding is given by $\phi(x)=\{\alpha\in\mathfrak{F}(A):x\in\alpha\}$.

The definition of the partial-order structure $\preceq$ induced by a bounded quasi-implication algebra $A$, the definition of meets in $A$, and Lemma \ref{meets} suggests the following definition of a proper filter in a bounded quasi-implication algebra. 
\begin{definition}
    Let $A$ be a bounded quasi-implication algebra. A \emph{filter} on $A$ is a non-empty subset $\alpha\subseteq A$ satisfying the following conditions: 
    \begin{enumerate}
        \item if $x\in\alpha$ and $x\cdot y=1$, then $y\in\alpha$; 
        \item if $x\in \alpha$ and $y\in \alpha$, then $((x\cdot y)\cdot(x\cdot 0))\cdot 0\in \alpha$
    \end{enumerate}
    We regard a filter $\alpha\subseteq A$ as \emph{proper} whenever $0\not\in \alpha$.  
\end{definition}

For any bounded quasi-implication algebra $A$, let $\mathfrak{F}(A)$ denote the set of all proper filters of $A$. The following definition mirrors the construction of an orthogonality relation via the proper filters of an ortholattice given by Goldblatt. 
\begin{definition}
    Let $A$ be a bounded quasi-implication algebra. Define $\perp^G_A\subseteq\mathfrak{F}(A)\times\mathfrak{F}(A)$ by $\alpha\perp^G_A \beta$ iff there exists $x\in A$ such that $x\in \alpha$ and $x\cdot 0\in \beta$. 
\end{definition}
For any bounded quasi-implication algebra $A$, we call $\langle\mathfrak{F}(A);\perp^G_A\rangle$ the \emph{Goldblatt frame} of $A$.   
\begin{proposition}
    If $A$ is a bounded quasi-implication algebra, then its Goldblatt frame $X_A^G$ is an orthoframe. 
\end{proposition}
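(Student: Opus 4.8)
The plan is to verify directly the two defining properties of an orthogonality relation, namely irreflexivity and symmetry, by exploiting the fact that every bounded quasi-implication algebra $A$ induces an orthomodular lattice $\langle A;\oplus,\odot,{}^*,0,1\rangle$ via Lemma \ref{quasi-implication algebra is an orthomodular lattice}, in which $x^*=x\cdot 0$ is an orthocomplementation and $x\odot y=((x\cdot y)\cdot(x\cdot 0))\cdot 0$ by Lemma \ref{meets}. The key observation is that the two filter axioms translate into familiar order-theoretic closure conditions on the induced lattice: axiom (1) says $\alpha$ is upward closed with respect to $\preceq$, while axiom (2) says precisely that $\alpha$ is closed under the induced meet $\odot$, since the term $((x\cdot y)\cdot(x\cdot 0))\cdot 0$ appearing there is exactly $x\odot y$.

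For symmetry, suppose $\alpha\perp^G_A\beta$, witnessed by some $x\in A$ with $x\in\alpha$ and $x\cdot 0\in\beta$. I would simply set $y:=x\cdot 0$, so that $y\in\beta$, and then observe that $y\cdot 0=(x\cdot 0)\cdot 0=x\in\alpha$, using that orthocomplementation is an involution, i.e.\ $(x\cdot 0)\cdot 0=x^{**}=x$ (equivalently, \cite[Lemma 26]{hardegree1}). This $y$ then witnesses $\beta\perp^G_A\alpha$, which gives symmetry.

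For irreflexivity, I would argue by contradiction: suppose $\alpha\perp^G_A\alpha$, so there is some $x$ with both $x\in\alpha$ and $x\cdot 0\in\alpha$. Since $\alpha$ is closed under $\odot$ by filter axiom (2), it follows that $x\odot(x\cdot 0)\in\alpha$. But $x\odot(x\cdot 0)=x\odot x^{*}=x\wedge x^{\perp}=0$ in the induced orthomodular lattice, by the orthocomplementation law $x\wedge x^{\perp}=0$. Hence $0\in\alpha$, contradicting the properness of $\alpha$, and irreflexivity follows.

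I do not expect any serious obstacle here; the whole argument reduces to reading off the filter axioms as meet-closure and upward-closure in the induced orthomodular lattice and then invoking the two basic orthocomplementation identities $x^{**}=x$ and $x\wedge x^{\perp}=0$. The only point requiring mild care is the bookkeeping: confirming that the purely magma-theoretic term $((x\cdot y)\cdot(x\cdot 0))\cdot 0$ in filter axiom (2) is literally the meet $x\odot y$, which is exactly the content of Lemma \ref{meets}, and that $(x\cdot 0)\cdot 0=x$ holds throughout $A$.
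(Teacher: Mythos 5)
Your proposal is correct and follows essentially the same route as the paper: symmetry via the involution $(x\cdot 0)\cdot 0=x$ with the witness $y=x\cdot 0$, and irreflexivity by applying filter axiom (2) to $x$ and $x\cdot 0$ to force $0\in\alpha$. The only cosmetic difference is that the paper evaluates the term $((x\cdot(x\cdot 0))\cdot(x\cdot 0))\cdot 0=0$ by direct syntactic computation using Lemma \ref{lemma}(1), whereas you identify it as $x\odot x^{*}$ and invoke the orthocomplementation law in the induced orthomodular lattice; both justifications are valid.
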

\begin{proof}
   
 For irreflexivity, assume for the sake of contradiction that $\alpha\perp_A\alpha$ for some $\alpha\in\mathfrak{F}(A)$. Then there exists $x\in A$ such that $x\in \alpha$ and $x\cdot0\in\alpha$. Since $\alpha$ is a filter, we have $((x\cdot(x\cdot 0))\cdot(x\cdot 0))\cdot 0\in \alpha$, but by Lemma \ref{lemma}(1), Definition \ref{top element}, and Proposition \ref{top element lemma}, we have: 
    \begin{align*}
        ((x\cdot(x\cdot 0))\cdot(x\cdot 0))\cdot 0=((x\cdot 0)\cdot(x\cdot 0))\cdot 0=1\cdot 0=0
    \end{align*}
and hence $0\in \alpha$, but this contradicts our hypothesis that $\alpha$ is a proper filter. 

For symmetry, assume $\alpha\perp^G_A\beta$ so that there exists $x\in A$ such that $x\in \alpha$ and $x\cdot 0\in \beta$. Note that by \cite[Lemma 26]{hardegree1}, we have $x=(x\cdot 0)\cdot 0$ and therefore $(x\cdot 0)\cdot 0\in\alpha$ with $x\cdot 0\in \beta$ which implies $\beta\perp^G_A\alpha$. This completes the proof that the Goldblatt frame $X_A^G$ of $A$ is an orthoframe.   
\end{proof}

\section{Monadic quasi-implication algebras and quantum monadic algebras}

Quantum monadic algebras were introduced by Janowitz \cite{janowitz} and then by Harding \cite{harding} and generalize the monadic Boolean algebras studied by Halmos in \cite{halmos}. 
\begin{definition}\label{monadic ortholattice}
A \emph{monadic ortholattice} is an algebra $\langle A;\wedge,\vee,^{\perp},0,1,\exists\rangle$ of type $\langle 2,2,1,0,0,1\rangle$ satisfying the following conditions:
\begin{enumerate}
    \item $\langle A;\wedge,\vee,^{\perp},0,1\rangle$ is an ortholattice;
    \item $\exists\colon A\to A$ is a unary operator, known as a \emph{quantifier}, satisfying the following conditions:
        \begin{enumerate}
        \item $\exists 0=0$
        \item $x\leq \exists x$
        \item $\exists(x\vee y)=\exists x\vee\exists y$
        \item $\exists\exists x=\exists x$
        \item $\exists(\exists x)^{\perp}=(\exists x)^{\perp}$
    \end{enumerate}
\end{enumerate}
We call $\langle A;\wedge,\vee,^{\perp},0,1,\exists\rangle$ a \emph{quantum monadic algebra} whenever $\langle A;\wedge,\vee,^{\perp},0,1\rangle$ is an orthomodular lattice. Moreover, we call $\langle A;\wedge,\vee,^{\perp},0,1,\exists\rangle$ a \emph{monadic Boolean algebra} whenever $\langle A;\wedge,\vee,^{\perp},0,1\rangle$ is a Boolean algebra.  
\end{definition}

Noting that $\exists x=x$ iff $\exists(x^{\perp})=x^{\perp}$, conditions 2(a) through 2(e) amount to asserting that a quantifier on an orthomodular lattice is a closure operator whose closed elements form an orthomodular sub-lattice. Notice that by dualizing $\exists$ by an operator $\forall\colon A\to A$ defined by $\forall x:=(\exists x^{\perp})^{\perp}$, we obtain an interior operator whose open elements form an orthomodular sub-lattice.

\begin{remark}
    It is important to observe that unlike every monadic Boolean algebra, it is not necessarily true that $\exists(x\wedge\exists y)=\exists x\wedge\exists y$ and $\forall(x\vee\forall y)=\forall x\vee\forall y$ obtain in a quantum monadic algebra. 
\end{remark}

Notice that in any monadic ortholattice $A$, we have that $\exists a\leq b$ if and only if $a\leq\forall b$ for all $a,b\in A$ and hence $\exists$ preserves arbitrary joins and $\forall$ preserves arbitrary meets, whenever they exist. 

We now introduce monadic quasi-implication algebras. 
\par
\vspace{1cm}

\begin{definition}\label{def of mqia}
    A \emph{monadic quasi-implication algebra} is an algebra $\langle A;\cdot,0,\Diamond\rangle$ of similarity type $\langle 2,0,1\rangle$ satisfying the following conditions: 
    \begin{enumerate}
        \item $\langle A;\cdot,0\rangle$ is a bounded quasi-implication algebra; 
        \item $\Diamond\colon A\to A$ is a unary operation satisfying the following conditions: 
        \begin{enumerate}
            \item $\Diamond\Diamond x\cdot\Diamond x=1$ and $x\cdot\Diamond x=1$;
            \item $\Diamond(\Diamond x\cdot 0)=\Diamond x\cdot 0$ and $\Diamond 0=0$; 
            \item $\Diamond(((x\cdot 0)\cdot(y\cdot0))\cdot x)=((\Diamond x\cdot 0)\cdot(\Diamond y\cdot0))\cdot \Diamond x$
            
        \end{enumerate}
    \end{enumerate}
\end{definition}
The following is an immediate consequence of the definition of monadic quasi-implication algebras and the induced partial order relation described in Lemma \ref{quasi-implication algebra is an orthomodular lattice}. 
\begin{proposition}\label{idempotent}
    If $A$ is a monadic quasi-implication algebra, then $\Diamond\colon A\to A$ is an idempotent operation.  
\end{proposition}
\begin{proof}
By condition 2(a) of Definition \ref{def of mqia} we have $\Diamond\Diamond x\cdot \Diamond x=1$ and $\Diamond x\cdot\Diamond\Diamond x=1$ so $\Diamond\Diamond x\preceq \Diamond x$ and $\Diamond x\preceq\Diamond\Diamond x$. By the anti-symmetry of $\preceq$, we have $\Diamond\Diamond x=\Diamond x$. 
\end{proof}
The following will be used in the proof of Theorem \ref{thm 4.5}. 
\begin{lemma}\label{useful lemma}
    Let $A$ be an orthomodular lattice. Then: 
    \[p_s(p_s(x^{\perp},y^{\perp}),x)=x\vee y.\]
\end{lemma}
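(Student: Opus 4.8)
The plan is to prove the identity $p_s(p_s(x^{\perp},y^{\perp}),x)=x\vee y$ by direct computation, unwinding the definition $p_s(a,b)=a^{\perp}\vee(a\wedge b)$ twice and then simplifying using the orthomodular law. First I would compute the inner term $a:=p_s(x^{\perp},y^{\perp})=x\vee(x^{\perp}\wedge y^{\perp})$, so that $a^{\perp}=x^{\perp}\wedge(x\vee y)$ by De Morgan and involution. The outer expression is then $p_s(a,x)=a^{\perp}\vee(a\wedge x)$, and so I would need to evaluate the two pieces $a^{\perp}=x^{\perp}\wedge(x\vee y)$ and $a\wedge x=(x\vee(x^{\perp}\wedge y^{\perp}))\wedge x$.

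Next I would simplify $a\wedge x$. Since $x\le x\vee(x^{\perp}\wedge y^{\perp})=a$, absorption gives $a\wedge x=x$ outright, so the outer join collapses to
\begin{equation*}
p_s(a,x)=\bigl(x^{\perp}\wedge(x\vee y)\bigr)\vee x.
\end{equation*}
At this point the key step is to show $\bigl(x^{\perp}\wedge(x\vee y)\bigr)\vee x=x\vee y$. This is precisely where orthomodularity enters: applying the orthomodular law in the form of Proposition \ref{oml characterization}(3), namely $x\vee y=x\vee(x^{\perp}\wedge(x\vee y))$, yields the desired equality immediately (after commuting the join).

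I expect the main obstacle to be the careful handling of the associativity/commutativity bookkeeping and ensuring the absorption $a\wedge x=x$ is justified — both of these are elementary lattice facts, but one must confirm that $x\le a$ really does hold, which is clear since $a$ is a join having $x$ as a joinand. The genuinely nontrivial ingredient is the single invocation of the orthomodular identity; everything else is ortholattice manipulation valid in any ortholattice. Since the statement fails in the Benzene ring (where orthomodularity fails), this reliance on Proposition \ref{oml characterization}(3) is essential and not merely convenient, so I would make sure to flag exactly where the orthomodular hypothesis is used.
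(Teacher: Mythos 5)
Your proof is correct and matches the paper's argument essentially step for step: both unwind $p_s$ twice, use De Morgan and involution to get $(x\vee(x^{\perp}\wedge y^{\perp}))^{\perp}=x^{\perp}\wedge(x\vee y)$, apply absorption to collapse $a\wedge x$ to $x$, and finish with a single application of the orthomodular identity from Proposition \ref{oml characterization}(3). Your remark that the orthomodular hypothesis is essential (witnessed by the failure in the Benzene ring) is a nice addition, but the underlying computation is the same as the paper's.
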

\begin{proof}
    The calculation is achieved in the following manner: 
    \begin{align*}
        p_s(s_s(x^{\perp},y^{\perp}),x)&=p_s(x^{\perp\perp}\vee(x^{\perp}\wedge y^{\perp}),x)\tag{definition of $p_s$}
        \\&=p_s(x\vee(x^{\perp}\wedge y^{\perp}),x)\tag{$^{\perp}$ is an involution}
        \\&=(x\vee(x^{\perp}\wedge y^{\perp}))^{\perp}\vee((x\vee(x^{\perp}\wedge y^{\perp}))\wedge x)\tag{definition of $p_s$}
        \\&=(x^{\perp}\wedge(x^{\perp}\wedge y^{\perp})^{\perp})\vee((x\vee(x^{\perp}\wedge y^{\perp}))\wedge x)\tag{De Morgan's Laws}
        \\&=(x^{\perp}\wedge(x^{\perp\perp}\vee y^{\perp\perp}))\vee((x\vee(x^{\perp}\wedge y^{\perp}))\wedge x)\tag{De Morgan's Laws}
        \\&=(x^{\perp}\wedge(x\vee y))\vee((x\vee(x^{\perp}\wedge y^{\perp}))\wedge x)\tag{$^{\perp}$ is an involution}
        \\&=(x^{\perp}\wedge(x\vee y))\vee x\tag{Absorption Laws}
        \\&= x\vee y \tag{Proposition \ref{oml characterization}(3)}
    \end{align*}
    Notice that the final step of the above calculation makes use of the orthomodularity identity.  
\end{proof}

The following result shows that every quantum monadic algebra can be converted into a monadic quasi-implication algebra. 
\begin{theorem}\label{thm 4.5}
    If $A$ is a quantum monadic algebra, then $\langle A;p_s,0,\exists\rangle$ is a monadic quasi-implication algebra. 
\end{theorem}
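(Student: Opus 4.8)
The plan is to verify the two clauses of Definition \ref{def of mqia} in turn, reducing each to a statement about the underlying orthomodular lattice $A$ and its quantifier $\exists$. Clause 1 asks that $\langle A;p_s,0\rangle$ be a bounded quasi-implication algebra, and this is essentially already in hand: Lemma \ref{implication is quasi-imlpication} gives that $\langle A;p_s\rangle$ is a quasi-implication algebra whenever $A$ is orthomodular, and the computation $p_s(0,x)=0^{\perp}\vee(0\wedge x)=1$ recorded after Definition \ref{bottom element} shows the constant $0$ behaves as the required bottom element (i.e.\ $0\cdot x=1$). So the substance of the argument lies entirely in clause 2.

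First I would record two translation facts that rewrite the magma notation in lattice terms. On the one hand, $x\cdot 0=p_s(x,0)=x^{\perp}\vee(x\wedge 0)=x^{\perp}$, so that $\cdot\,0$ is precisely orthocomplementation. On the other hand, the induced order $\preceq$ of Lemma \ref{partial order} coincides with the lattice order $\leq$: if $x\cdot y=1$ then $x=x\wedge 1=x\wedge(x\cdot y)\leq y$ by the modus ponens law $x\wedge(x\cdot y)\leq y$ satisfied by $p_s$, while the converse $x\leq y\Rightarrow x\cdot y=1$ is Hardegree's first implication condition. Hence every equation of the form $u\cdot v=1$ appearing in clause 2 may be read as the inequality $u\leq v$.

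With these translations fixed, conditions 2(a) and 2(b) should fall out directly by matching against the quantifier axioms of Definition \ref{monadic ortholattice}. The equation $x\cdot\Diamond x=1$ becomes $x\leq\exists x$, which is axiom 2(b); the equation $\Diamond\Diamond x\cdot\Diamond x=1$ becomes $\exists\exists x\leq\exists x$, which follows from idempotence $\exists\exists x=\exists x$ (axiom 2(d), cf.\ Proposition \ref{idempotent}); the identity $\Diamond(\Diamond x\cdot 0)=\Diamond x\cdot 0$ becomes $\exists((\exists x)^{\perp})=(\exists x)^{\perp}$, which is axiom 2(e); and $\Diamond 0=0$ is axiom 2(a). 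Each of these is a one-line verification once the translation facts are available.

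The hard part will be condition 2(c), and this is where Lemma \ref{useful lemma} does the decisive work. On the left-hand side, using $x\cdot 0=x^{\perp}$ and $y\cdot 0=y^{\perp}$, the inner term is $((x\cdot 0)\cdot(y\cdot 0))\cdot x=p_s(p_s(x^{\perp},y^{\perp}),x)$, which Lemma \ref{useful lemma} evaluates to $x\vee y$; thus the left-hand side collapses to $\exists(x\vee y)$. Applying the same lemma with $x,y$ replaced by $\exists x,\exists y$ reduces the right-hand side $((\Diamond x\cdot 0)\cdot(\Diamond y\cdot 0))\cdot\Diamond x=p_s(p_s((\exists x)^{\perp},(\exists y)^{\perp}),\exists x)$ to $\exists x\vee\exists y$. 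Consequently condition 2(c) is exactly the join-preservation axiom $\exists(x\vee y)=\exists x\vee\exists y$ (axiom 2(c)), which holds by hypothesis. The only subtlety is ensuring that the orthomodular identity is applied correctly inside Lemma \ref{useful lemma}; once that lemma is invoked twice, the verification of the theorem is complete.
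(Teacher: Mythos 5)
Your proposal is correct and follows essentially the same route as the paper's proof: clause 1 is discharged by Lemma \ref{implication is quasi-imlpication}, conditions 2(a) and 2(b) reduce to the quantifier axioms via the identity $p_s(x,0)=x^{\perp}$, and condition 2(c) is handled by two applications of Lemma \ref{useful lemma} together with join-preservation of $\exists$. The only cosmetic difference is that you route 2(a) through the order-translation $u\cdot v=1\Leftrightarrow u\leq v$ whereas the paper expands $p_s$ directly; the substance is identical.
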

\begin{proof}
    By Lemma \ref{implication is quasi-imlpication}, $\langle A;p_s,0\rangle$ is a bounded quasi-implication algebra whenever $A$ is an orthomodular lattice and hence it suffices to demonstrate that $\exists\colon A\to A$ satisfies conditions 2(a)--2(c) in Definition \ref{def of mqia}.

    The calculation for condition 2(a) runs as follows: \[p_s(\exists\exists x,\exists x)=(\exists\exists x)^{\perp}\vee(\exists\exists x\wedge\exists x)=(\exists x)^{\perp}\vee\exists x=1.\] as well as $p_s(x,\exists x)=x^{\perp}\vee(x\wedge\exists x)=x^{\perp}\vee x=1$. For condition 2(b), it is trivial that $\exists 0=0$.  To verify $\exists p_s(\exists x,0)=p_s(\exists x,0)$ we first note that:
\begin{equation}\label{equation 4.1}
    p_s(x,0)=x^{\perp}\vee(x\wedge 0)=x^{\perp}\vee 0=x^{\perp}
\end{equation}
and hence by applying Equation \ref{equation 4.1}, condition 2(e) of Definition \ref{monadic ortholattice}, followed by another application of Equation \ref{equation 4.1}, we obtain: 
\[\exists p_s(\exists x,0)=\exists(\exists x)^{\perp}=(\exists x)^{\perp}=p_s(\exists x,0).\]
    To see that condition 2(c) of Definition \ref{def of mqia} is satisfied, it suffices to demonstrate: 
    \[ \exists p_s(p_s(p_s(x,0),p_s(y,0)),x)=p_s(p_s(p_s(\exists x,0),p_s(\exists y,0)),\exists x)\]
The calculation proceeds in the following manner: 
\begin{align*}
    \exists p_s(p_s(p_s(x,0),p_s(y,0)),x)&=\exists p_s(p_s(x^{\perp},y^{\perp}),x)\tag{by Equation \ref{equation 4.1}}
    \\&=\exists(x\vee y)\tag{by Lemma \ref{useful lemma}}
    \\&=\exists x\vee\exists y\tag{by Definition \ref{monadic ortholattice}.2(c)}
    \\&=p_s(p_s((\exists x)^{\perp},(\exists y)^{\perp}),\exists x)\tag{by Lemma \ref{useful lemma}}
    \\&=p_s(p_s(p_s(\exists x,0),p_s(\exists y,0)),\exists x)\tag{by Equation \ref{equation 4.1}}
\end{align*}

    This completes the proof that $\langle A;p_s,0,\exists\rangle$ forms a monadic quasi-implication algebra whenever $A$ is a quantum monadic algebra.     
\end{proof}

Conversely to Theorem \ref{thm 4.5}, the proceeding result shows that every monadic quasi-implication algebra can be converted into a quantum monadic algebra. 
\begin{theorem}\label{thm 4.6}
    If $A$ is a monadic quasi-implication algebra, then the algebra $\langle A;\odot,\oplus,^*,0,1,\Diamond\rangle$ is a quantum monadic algebra. 
\end{theorem}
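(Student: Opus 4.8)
The plan is to observe that the orthomodular lattice structure is handed to us outright by Lemma \ref{quasi-implication algebra is an orthomodular lattice}, so the entire content of the theorem reduces to checking that the operator $\Diamond$ satisfies the five conditions 2(a)--2(e) demanded of a quantifier $\exists$ in Definition \ref{monadic ortholattice}. Since $\langle A;\cdot,0\rangle$ is a bounded quasi-implication algebra, Lemma \ref{quasi-implication algebra is an orthomodular lattice} guarantees that $\langle A;\odot,\oplus,^*,0,1\rangle$ is an orthomodular lattice with orthocomplementation $x^*=x\cdot 0$, join $\oplus$, and meet $\odot$. Throughout I would use that the lattice order $\leq$ coincides with the partial order $\preceq$ of Lemma \ref{partial order}, so that $a\leq b$ holds exactly when $a\cdot b=1$, and that under this correspondence the magma operation $\cdot$ is the Sasaki implication $p_s$ of the induced orthomodular lattice.

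First I would dispatch the four axioms that are essentially verbatim restatements of the hypotheses. The second half of condition 2(b) of Definition \ref{def of mqia}, namely $\Diamond 0=0$, is quantifier axiom 2(a). The second half of condition 2(a), namely $x\cdot\Diamond x=1$, says precisely $x\preceq\Diamond x$, i.e. $x\leq\Diamond x$, which is quantifier axiom 2(b). Idempotence $\Diamond\Diamond x=\Diamond x$, quantifier axiom 2(d), is exactly Proposition \ref{idempotent}. Finally, since $(\Diamond x)^*=\Diamond x\cdot 0$, the first half of condition 2(b) reads $\Diamond(\Diamond x)^*=(\Diamond x)^*$, which is quantifier axiom 2(e).

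The substantive step is the join-preservation axiom 2(c), $\Diamond(x\oplus y)=\Diamond x\oplus\Diamond y$, and here the crux is recognizing that the opaque quasi-implication term appearing in condition 2(c) of Definition \ref{def of mqia} is exactly the lattice join. Translating $\cdot$ into $p_s$ and $x\cdot 0$ into $x^{\perp}$, Lemma \ref{useful lemma} yields $((x\cdot 0)\cdot(y\cdot 0))\cdot x=p_s(p_s(x^{\perp},y^{\perp}),x)=x\vee y=x\oplus y$, and the same computation applied to $\Diamond x,\Diamond y$ gives $((\Diamond x\cdot 0)\cdot(\Diamond y\cdot 0))\cdot\Diamond x=\Diamond x\oplus\Diamond y$. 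With these two identifications substituted into condition 2(c) of Definition \ref{def of mqia}, that condition becomes verbatim $\Diamond(x\oplus y)=\Diamond x\oplus\Diamond y$, which is quantifier axiom 2(c).

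I expect the only genuine obstacle to be carrying out this identification cleanly, since it depends on invoking Lemma \ref{useful lemma} under the dictionary $\cdot\leftrightarrow p_s$, $x\cdot 0\leftrightarrow x^{\perp}$, and on confirming that the $\preceq$-order truly coincides with the orthomodular order so that the four restatements of the preceding paragraph are legitimate. Once the compound term $((x\cdot 0)\cdot(y\cdot 0))\cdot x$ is recognized as $x\oplus y$, all five quantifier axioms follow immediately, establishing that $\langle A;\odot,\oplus,^*,0,1,\Diamond\rangle$ is a quantum monadic algebra.
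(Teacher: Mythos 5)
Your proposal is correct, and for four of the five quantifier axioms it coincides with the paper's proof (normality and increasingness read off from conditions 2(a)--(b) of Definition \ref{def of mqia}, idempotence from Proposition \ref{idempotent}, and $\Diamond(\Diamond x)^*=(\Diamond x)^*$ from condition 2(b)). Where you genuinely diverge is the join-preservation step. The paper stays entirely inside the equational theory of bounded quasi-implication algebras: it unfolds $x\oplus y$ as $((x\cdot 0)\odot(y\cdot 0))\cdot 0$, expands $\odot$ via Lemma \ref{meets}, and collapses the resulting term to $((x\cdot 0)\cdot(y\cdot 0))\cdot x$ using the involution law $(z\cdot 0)\cdot 0=z$, at which point condition 2(c) applies directly; no appeal to the Sasaki-implication picture is needed. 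You instead pass to the induced orthomodular lattice and invoke Lemma \ref{useful lemma} under the dictionary $\cdot\leftrightarrow p_s$, $x\cdot 0\leftrightarrow x^{\perp}$. That works, but it quietly relies on the nontrivial fact that the magma operation $\cdot$ really is the Sasaki implication of the induced orthomodular lattice, i.e. $x\cdot y=x^*\oplus(x\odot y)$ (Hardegree's Lemma 30, which the paper only invokes in the round-trip theorem \emph{after} this one). If you make that citation explicit your argument is complete; the trade-off is that your route is conceptually transparent (the opaque term is visibly a join) while the paper's is more self-contained, using only identities already deployed in Section 2.
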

\begin{proof}
    By Lemma \ref{quasi-implication algebra is an orthomodular lattice}, the reduct $\langle A;\odot,\oplus,^*,0,1\rangle$ is an orthomodular lattice if $\langle A;\cdot,0\rangle$ is a bounded quasi-implication algebra and hence it suffices to demonstrate that $\Diamond$ is a quantifier on the orthomodular lattice induced by $A$. 

    It is trivial that $\Diamond$ is a normal operator since $\Diamond 0=0$ by condition 2(b) of Definition \ref{def of mqia}. To see that $\Diamond$ is increasing, notice that $x\cdot\Diamond x=1$ by condition 2(a) of Definition \ref{def of mqia} so that $x\preceq\Diamond x$ for all $x\in A$.

    Condition 2(c) of Definition \ref{def of mqia} along with Lemma \ref{meets} guarantees that $\Diamond$ is additive in the sense that $\Diamond(x\oplus y)=\Diamond x\oplus\Diamond y$ as can be seen by:  
    \begin{align*}
        \Diamond(x\oplus y)&=\Diamond(((x\cdot 0)\odot(y\cdot 0))\cdot 0)\tag{Definition of $\oplus$}
        \\&=\Diamond((((x\cdot 0)\cdot(y\cdot 0))\cdot((x\cdot 0)\cdot0))\cdot0)\cdot 0)\tag{Definition of $\odot$}
        \\&=\Diamond(((x\cdot 0)\cdot(y\cdot0))\cdot x)\tag{\cite[Lemma 26]{hardegree1}}
        \\&=((\Diamond x\cdot 0)\cdot(\Diamond y\cdot0))\cdot \Diamond x\tag{Definition \ref{def of mqia}, 2(c)}
        \\&=(((\Diamond x\cdot 0)\cdot(\Diamond y\cdot 0))\cdot((\Diamond x\cdot 0)\cdot0))\cdot0)\cdot 0\tag{\cite[Lemma 26]{hardegree1}}
        \\&=((\Diamond x\cdot 0)\odot(\Diamond y\cdot 0))\cdot 0\tag{Definition of $\odot$}
        \\&=\Diamond x\oplus\Diamond y\tag{Definition of $\oplus$}
    \end{align*}

    We have already established in Proposition \ref{idempotent} that $\Diamond$ is an idempotent operator and hence it remains to verify that $\Diamond(\Diamond x)^*=(\Diamond x)^*$ for all $x\in A$. The result is obtained by condition 2(b) of Definition \ref{def of mqia} in the following manner: 
    \begin{align*}
        \Diamond(\Diamond x)^*&=\Diamond(\Diamond x\cdot 0)=\Diamond x\cdot 0=(\Diamond x)^*.
    \end{align*}
    This completes the proof that $\langle A;\odot,\oplus,^*,0,1,\Diamond\rangle$ is a quantum monadic algebra whenever $A$ is a monadic quasi-implication algebra. 
\end{proof}
\begin{theorem}
    Let $\mathfrak{A}=\langle A;\wedge,\vee,^{\perp},0,1,\exists\rangle$ be a quantum monadic algebra, let $\mathfrak{B}=\langle A;p_s,0,\exists\rangle$ be the corresponding monadic quasi-implication algebra obtained from $\mathfrak{A}$, and let $\widehat{\mathfrak{A}}=\langle A;\odot,\oplus,^*,0,1,\exists\rangle$ be the corresponding quantum monadic algebra obtained from $\mathfrak{B}$. Then $\mathfrak{A}=\widehat{\mathfrak{A}}$.  
\end{theorem}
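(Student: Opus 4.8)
The plan is to verify that each operation of $\widehat{\mathfrak{A}}$ agrees with the corresponding operation of $\mathfrak{A}$, noting first that both algebras share the carrier $A$ and the constant $0$. The quantifier requires no work: Theorem \ref{thm 4.5} carries $\exists$ over to $\mathfrak{B}$ unchanged, and Theorem \ref{thm 4.6} defines the quantifier of $\widehat{\mathfrak{A}}$ to be precisely the monadic operation of $\mathfrak{B}$, so the two quantifiers are literally the same map. It therefore remains to show that the recovered operations $^*, 1, \oplus, \odot$ of $\widehat{\mathfrak{A}}$ coincide with $^{\perp}, 1, \vee, \wedge$ of $\mathfrak{A}$.

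The orthocomplement and the top element are immediate. By Equation \ref{equation 4.1}, $x^* = p_s(x,0) = x^{\perp}$, so $^* = {}^{\perp}$; and $1 = p_s(x,x) = x^{\perp}\vee(x\wedge x) = x^{\perp}\vee x = 1$ recovers the original top. The bottom is the shared constant $0$.

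For the two binary lattice operations I would avoid expanding the defining polynomial $x\oplus y = p_s(p_s(p_s(x,y),p_s(y,x)),x)$ and argue instead through the induced order. By Lemma \ref{quasi-implication algebra is an orthomodular lattice}, in the recovered orthomodular lattice $\oplus$ and $\odot$ are the join and meet with respect to the order $\preceq$ that $\langle A;p_s\rangle$ induces via Lemma \ref{partial order}, namely $x\preceq y \iff p_s(x,y)=1$. Since joins and meets are uniquely determined by the order, it suffices to show that $\preceq$ equals the original lattice order $\leq$ of $\mathfrak{A}$; then $\oplus = \vee$ and $\odot = \wedge$ follow at once by uniqueness of suprema and infima.

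To prove $\preceq = {\leq}$ I would use the two implicational properties of $p_s$ recorded in the excerpt. The law of implication, $x\leq y \Rightarrow p_s(x,y)=1$, gives ${\leq}\subseteq{\preceq}$; and the law of modus ponens, $x\wedge p_s(x,y)\leq y$, gives the reverse inclusion, since $p_s(x,y)=1$ forces $x = x\wedge 1 = x\wedge p_s(x,y)\leq y$. I expect the only point needing care --- the main obstacle --- to be the appeal to Lemma \ref{quasi-implication algebra is an orthomodular lattice} that identifies the formula-defined $\oplus,\odot$ with the supremum and infimum for $\preceq$, since it is exactly this identification that lets the uniqueness-of-bounds argument stand in for a direct (and considerably messier) computation via Lemma \ref{useful lemma} and orthomodularity. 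Once this is in place, every remaining equality holds and $\mathfrak{A} = \widehat{\mathfrak{A}}$.
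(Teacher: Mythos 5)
Your proposal is correct, but it reaches the conclusion for the binary operations by a different route than the paper. The paper also records the order equivalence $x\leq y\Leftrightarrow p_s(x,y)=1\Leftrightarrow x\preceq y$, but its actual verification that $\odot=\wedge$ is a direct equational computation: it expands $x\odot y=p_s(p_s(p_s(x,y),p_s(x,0)),0)$ and reduces it to $x\wedge y$ using the orthomodular identity of Proposition \ref{oml characterization}(3), the absorption laws, and De Morgan's laws. You instead prove ${\preceq}={\leq}$ (the inclusion ${\leq}\subseteq{\preceq}$ from the implication law, the reverse from modus ponens for $p_s$, which is where orthomodularity enters your argument) and then appeal to uniqueness of suprema and infima, using the fact that Lemma \ref{quasi-implication algebra is an orthomodular lattice} identifies $\oplus$ and $\odot$ as the least upper bound and greatest lower bound for the induced order. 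You correctly flag this identification as the load-bearing step: your argument leans on that part of Hardegree's theorem, whereas the paper's computation verifies the coincidence of the polynomials directly and does not need the lub/glb characterization. The trade-off is that your version is shorter and more conceptual (and handles $\oplus$ and $\odot$ simultaneously, where the paper computes $\odot$ and leaves $\oplus$ to De Morgan), while the paper's is self-contained at the level of equations. Both treatments of $^*$, $1$, $0$, and the quantifier agree.
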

\begin{proof}
    First observe that the underlying carrier set of $\mathfrak{A}$ is identical to the underlying carrier set of $\widehat{\mathfrak{A}}$. We also have that: 
    \[x\leq y\Longleftrightarrow p_s(x,y)=1\Longleftrightarrow x\preceq y.\] This fact can be demonstrated algebraically as follows: 
    \begin{align*}
        x\wedge y&=(x^{\perp}\vee(x\wedge y))\wedge x\tag{by Proposition \ref{oml characterization}(3)}
        \\&=p_s(x,y)\wedge x\tag{by the definition of $p_s$}
        \\&=p_s(x,y)\wedge(p_s(x,y)^{\perp}\vee x)\tag{by the absorption laws}
        \\&=p_s(p_s(x,y),x^{\perp})^{\perp}\tag{definition of $p_s$, De Morgan's laws}
        \\&=p_s(p_s(p_s(x,y),p_s(x,0)),0)\tag{since $x^{\perp}=p_s(x,0)$}
        \\&=x\odot y\tag{definition of $\odot$}
    \end{align*}
    For the operation of orthocomplementation, we have already seen that $x^{\perp}=p_s(x,0)=x^{*}$ for all $x\in A$. Clearly the quanifiers associated with $\mathfrak{A}$ and $\widehat{\mathfrak{A}}$ are identical. 
\end{proof}
 \begin{theorem}
     Let $\mathfrak{A}=\langle A;\cdot,0,\Diamond\rangle$ be a monadic quasi-implication algebra, let $\mathfrak{B}=\langle A;\odot,\oplus,^*,0,1,\Diamond\rangle$ be the corresponding quantum monadic algebra obtained from $\mathfrak{A}$, and let $\widehat{\mathfrak{A}}=\langle A;p_s,0,\Diamond\rangle$ be the corresponding monadic quasi-implication algebra obtained from $\mathfrak{B}$. Then $\mathfrak{A}=\widehat{\mathfrak{A}}$. 
 \end{theorem}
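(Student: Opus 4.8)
The plan is to follow the same bookkeeping strategy used in the preceding theorem: since $\widehat{\mathfrak{A}}$ is produced from $\mathfrak{A}$ by first passing to an orthomodular lattice via Lemma \ref{quasi-implication algebra is an orthomodular lattice} and then returning to a magma via Theorem \ref{thm 4.5}, there are only three things to match, namely the carrier set, the pair consisting of the constant $0$ and the operator $\Diamond$, and the binary operation. First I would note that none of the constructions involved alters the underlying set, so $\mathfrak{A}$ and $\widehat{\mathfrak{A}}$ share the carrier $A$. The constant $0$ is transported verbatim by both Theorem \ref{thm 4.6} and Theorem \ref{thm 4.5}, and the unary operation is likewise carried through unchanged (written $\Diamond$ in each direction), so the operators of $\mathfrak{A}$ and $\widehat{\mathfrak{A}}$ coincide as well. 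Everything therefore reduces to a single identity: the Sasaki implication $p_s$ computed in the orthomodular lattice $\mathfrak{B}=\langle A;\odot,\oplus,^{*},0,1\rangle$ must agree with the original magma operation $\cdot$ of $\mathfrak{A}$.

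To set this up I would unwind the definition of Sasaki implication inside $\mathfrak{B}$. By Lemma \ref{quasi-implication algebra is an orthomodular lattice} the meet, join, and orthocomplement of $\mathfrak{B}$ are precisely $\odot$, $\oplus$, and $^{*}$, so that
\[
p_s(x,y)=x^{*}\oplus(x\odot y),
\]
which, after invoking Lemma \ref{meets} and the definition $x^{*}=x\cdot 0$, is an expression entirely in the language of $\cdot$ and $0$. The remaining task is then to verify the purely magma-theoretic identity
\[
x\cdot y=(x\cdot 0)\oplus\bigl(((x\cdot y)\cdot(x\cdot 0))\cdot 0\bigr),
\]
where $\oplus$ is itself expanded through its definition in Lemma \ref{quasi-implication algebra is an orthomodular lattice}.

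I expect this final identity to be the main obstacle, and I see two routes to it. The conceptual route is to appeal to the fact, implicit in Hardegree \cite{hardegree1}, that the passage from a bounded quasi-implication algebra to an orthomodular lattice (Lemma \ref{quasi-implication algebra is an orthomodular lattice}) and the passage $\langle A;p_s\rangle$ from an orthomodular lattice to a quasi-implication algebra (Lemma \ref{implication is quasi-imlpication}) are mutually inverse on the level of the binary operation; that is, moving from $\langle A;\cdot,0\rangle$ to its orthomodular lattice and back through $p_s$ returns exactly $x\cdot y=x^{*}\oplus(x\odot y)$, so that $p_s={\cdot}$. The self-contained route is a direct equational reduction: starting from $x^{*}\oplus(x\odot y)$, expand $\oplus$ and $\odot$ using Lemma \ref{meets} and Lemma \ref{quasi-implication algebra is an orthomodular lattice}, and then simplify to $x\cdot y$ using the quasi-implication axioms of Definition \ref{quasi-implication algebra}, Lemma \ref{lemma}, Proposition \ref{top element lemma}, and the involution law $(x\cdot 0)\cdot 0=x$ of \cite[Lemma 26]{hardegree1}. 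The delicate point in the direct reduction is that collapsing $x^{\perp}\vee(x\wedge y)$ back onto the stored value $x\cdot y$ appears to lean on the orthomodular law rather than on mere ortholattice identities, mirroring the use of Proposition \ref{oml characterization}(3) in Lemma \ref{useful lemma}; the rest is routine bookkeeping with Hardegree's auxiliary lemmas. Once this identity is established we have $p_s={\cdot}$, and combined with the matching of carrier, $0$, and $\Diamond$ from the first step, this yields $\mathfrak{A}=\widehat{\mathfrak{A}}$.
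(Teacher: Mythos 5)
Your reduction is exactly the one the paper makes: carrier, $0$, and $\Diamond$ are transported verbatim, so everything comes down to the single identity $x\cdot y=x^{*}\oplus(x\odot y)$, and your ``conceptual route'' of invoking Hardegree is precisely what the paper does, citing \cite[Lemma 30]{hardegree1} for that identity rather than carrying out the direct equational reduction you sketch as an alternative. The proposal is correct and essentially the same as the paper's proof.
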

\begin{proof}
    It suffices to demonstrate the following equation: 
    \[x^*\oplus(x\odot y)=x\cdot y\] for all $x,y\in A$ where $x^*$, $x\oplus y$, and $x\odot y$ are defined as in the statement of Lemma \ref{quasi-implication algebra is an orthomodular lattice} but this has already been demontsrated in \cite[Lemma 30]{hardegree1}. 
\end{proof}

If $A$ and $A'$ are quantum monadic algebras, a function $h\colon A\to A'$ is a \emph{homomorphism} provided $h$ is a bounded lattice homomorphism satisfying $h(x^{\perp})=h(x)^{\perp}$ and $h(\exists x)=\exists h(x)$ for all $x\in A$. Similarly, when $A$ and $A'$ are monadic quasi-implication algebras, a function $h\colon A\to A'$ is a \emph{homomorphism} whenever $h(x\cdot y)=h(x)\cdot h(y)$, $h(0)=0'$, and $h(\Diamond x)=\Diamond h(x)$ for all $x\in A$. Clearly every homomorphism between monadic quasi-implication algebras preserves the top element since $h(1)=h(x\cdot x)=h(x)\cdot h(x)=1'$.

\begin{definition}
By $\mathbf{QMA}$ we denote the category of quantum monadic algebras and their homomorphisms. Moreover, by $\mathbf{MQIA}$ we denote the category of monadic quasi-implication algebras and their homomorphisms. 
\end{definition}
\begin{theorem}
    $\mathbf{QMA}$ is isomorphic to $\mathbf{MQIA}$. 
\end{theorem}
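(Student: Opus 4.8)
The plan is to exhibit two functors $F\colon \mathbf{QMA}\to\mathbf{MQIA}$ and $G\colon \mathbf{MQIA}\to\mathbf{QMA}$ and show that they are mutually inverse on both objects and morphisms, which is exactly what an isomorphism of categories requires. On objects, $F$ sends a quantum monadic algebra $\mathfrak{A}=\langle A;\wedge,\vee,^{\perp},0,1,\exists\rangle$ to $\langle A;p_s,0,\exists\rangle$, which is a monadic quasi-implication algebra by Theorem \ref{thm 4.5}, and $G$ sends a monadic quasi-implication algebra $\langle A;\cdot,0,\Diamond\rangle$ to $\langle A;\odot,\oplus,^*,0,1,\Diamond\rangle$, which is a quantum monadic algebra by Theorem \ref{thm 4.6}. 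The fact that $G\circ F$ and $F\circ G$ are the identity on objects is precisely the content of the two preceding theorems, which establish $\mathfrak{A}=\widehat{\mathfrak{A}}$ in both directions; since $F$ and $G$ leave the carrier set untouched and only reinterpret the operations, these identities give $GF=\mathrm{id}_{\mathbf{QMA}}$ and $FG=\mathrm{id}_{\mathbf{MQIA}}$ on objects.

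The remaining work is to define $F$ and $G$ on morphisms and verify that they are well-defined and functorial. Here I would simply let both functors act as the identity on the underlying set-theoretic function: given a $\mathbf{QMA}$-homomorphism $h\colon A\to A'$, I set $Fh=h$ and claim it is an $\mathbf{MQIA}$-homomorphism, and dually for $G$. The key verification is that preserving the ortholattice-with-quantifier structure is equivalent to preserving the quasi-implication-algebra structure, given that each operation in one signature is a polynomial in the operations of the other. First I would check that a $\mathbf{QMA}$-homomorphism $h$ preserves $p_s$: since $p_s(x,y)=x^{\perp}\vee(x\wedge y)$ is built from $\wedge,\vee,^{\perp}$, and $h$ preserves these, $h(p_s(x,y))=p_s(h(x),h(y))$ follows immediately; preservation of $0$ and of $\exists=\Diamond$ is given directly. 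Conversely, for a $\mathbf{MQIA}$-homomorphism $h$, I would use that $x^*=x\cdot 0$, $x\oplus y=((x\cdot y)\cdot(y\cdot x))\cdot x$, and $x\odot y=(x^*\oplus y^*)^*$ are all polynomials in $\cdot$ and $0$ (as recorded in Lemma \ref{quasi-implication algebra is an orthomodular lattice}), so preservation of $\cdot$ and $0$ forces preservation of $^*,\oplus,\odot,1$, and preservation of $\Diamond=\exists$ is hypothesized. Functoriality, namely $F(\mathrm{id})=\mathrm{id}$, $G(\mathrm{id})=\mathrm{id}$, and compatibility with composition, is automatic because $F$ and $G$ act as the identity on the underlying functions and composition of morphisms in both categories is ordinary function composition.

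I do not expect a serious obstacle here: the substance of the isomorphism is entirely carried by the two earlier object-level theorems together with the observation that morphisms in each category are precisely structure-preserving maps for interdefinable signatures. The only point demanding a little care is to state explicitly that $F$ and $G$ are identities on underlying functions, so that $GFh=h$ and $FGh=h$ hold on the nose rather than merely up to some identification; once this is noted, the bijection on hom-sets and the object-level identities $GF=\mathrm{id}$, $FG=\mathrm{id}$ assemble into the desired isomorphism of categories. Thus the plan reduces to: define $F,G$ as identity-on-carriers, invoke Theorems \ref{thm 4.5} and \ref{thm 4.6} for well-definedness on objects, check the two polynomial-interdefinability directions for well-definedness on morphisms, and cite the two equality-of-algebras theorems to conclude $GF=\mathrm{id}_{\mathbf{QMA}}$ and $FG=\mathrm{id}_{\mathbf{MQIA}}$.
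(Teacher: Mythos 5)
Your proposal is correct and follows essentially the same route as the paper: the paper likewise reduces the theorem to a bijective correspondence on homomorphisms, carried by carrier-preserving functors that reinterpret the interdefinable (polynomial) signatures, with the object-level identities supplied by the preceding theorems. Your write-up is somewhat more explicit about the functorial bookkeeping (identity on underlying functions, functoriality, $GF=\mathrm{id}$ and $FG=\mathrm{id}$ on the nose), which the paper leaves implicit, but the mathematical content is the same.
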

\begin{proof}
    In light of the results collected in this section thus far, it suffices to demonstrate a bijective correspondence between homomorphisms of quantum monadic algebras and homomorphisms of monadic quasi-implication algebras.  First, let $A$ and $A'$ be quantum monadic algebras and let $h\colon A\to A'$ be a homomorphism. Then:  
\begin{align*}
    h(p_s(x,y))&=h(x^{\perp}\vee(x\wedge y))=h(x^{\perp})\vee h(x\wedge y)\\&=h(x)^{\perp}\vee(h(x)\wedge h(y))=p_s(h(x),h(y))
\end{align*}
Conversely, if $A$ and $A'$ are monadic quasi-implication algebras and $h\colon A\to A'$ is a homomorphism: 
\begin{align*}
    h(x\odot y)&=h(((x\cdot y)\cdot(x\cdot 0))\cdot 0)
    \\&=((h(x)\cdot h(y))\cdot(h(x)\cdot h(0)))\cdot h(0)
    \\&=((h(x)\cdot h(y))\cdot(h(x)\cdot 0'))\cdot 0'=h(x)\odot h(y)
\end{align*}
    The remaining cases going in each direction run analogously. 
\end{proof}

\section{Monadic orthoframes induced by monadic \\ quasi-implication algebras}

Let $X$ be a set and $R$ be a binary relation on $X$. Then for any $U\subseteq X$, we will write $R[U]$ to denote the relational image of $U$ under $R$, i.e., \[R[U]=\{y\in X:xRy\hspace{.1cm}\text{for some}\hspace{.1cm}x\in U\}\] so that for any $x\in X$, we have $R[\{x\}]=\{y\in X:xRy\}$. 

 The following relational structures were introduced by Harding \cite{harding} and play an important role within the representation theory of monadic ortholattices.   
\begin{definition}\label{mof}
    A \emph{monadic orthoframe} is a triple $\langle X;\perp,R\rangle$ such that: 
    \begin{enumerate}
        \item $\langle X;\perp\rangle$ is an orthoframe;
        \item $R$ is a binary relation on $X$ that is reflexive and transitive; 
        \item $R[R[\{x\}]^{\perp}]\subseteq R[\{x\}]^{\perp}$ for all $x\in X$.  
    \end{enumerate}
\end{definition}
Condition 3 in the above definition can be viewed as the requirement that $R[\{x\}]^{\perp}$ be closed under $R$. 
Note that in any monadic orthoframe $\langle X;\perp,R\rangle$, we have  $R[R[\{x\}]^{\perp}]=R[\{x\}]^{\perp}$ for all $x\in X$, by the reflexivity of $R$. 

\begin{theorem}[\cite{harding}]
      If $X$ is a monadic orthoframe, $\langle\mathcal{B}(X);\cap,\sqcup,^{\perp},\emptyset,X,\exists_R\rangle$ is a complete monadic ortholattice where $\exists_RU:=R[U]^{\perp\perp}$ for all $U\in\mathcal{B}(X)$.
\end{theorem}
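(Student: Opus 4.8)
The plan is to verify that the structure $\langle\mathcal{B}(X);\cap,\sqcup,^{\perp},\emptyset,X,\exists_R\rangle$ satisfies the defining conditions of a complete monadic ortholattice (Definition \ref{monadic ortholattice}). By Theorem \ref{complete ol}, the reduct $\langle\mathcal{B}(X);\cap,\sqcup,^{\perp},\emptyset,X\rangle$ is already known to be a complete ortholattice, so the entire burden is to show that $\exists_R\colon\mathcal{B}(X)\to\mathcal{B}(X)$ is a well-defined quantifier, i.e.\ that it satisfies conditions 2(a)--2(e) of Definition \ref{monadic ortholattice}. First I would check that $\exists_R$ is well-defined, namely that $\exists_R U=R[U]^{\perp\perp}\in\mathcal{B}(X)$ for each $U\in\mathcal{B}(X)$; this is immediate since any set of the form $W^{\perp\perp}$ is bi-orthogonally closed because $^{\perp\perp\perp}=\,^{\perp}$ holds for the polarity, whence $(W^{\perp\perp})^{\perp\perp}=W^{\perp\perp}$.

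Next I would dispatch the routine conditions. For 2(a), normality $\exists_R\emptyset=\emptyset$ follows from $R[\emptyset]=\emptyset$ and $\emptyset^{\perp\perp}=\emptyset$ (since $\emptyset^{\perp}=X$ and $X^{\perp}=\emptyset$). For 2(b), the inflationary law $U\subseteq\exists_R U$ follows from reflexivity of $R$, which gives $U\subseteq R[U]\subseteq R[U]^{\perp\perp}$, the last inclusion being the standard fact that $V\subseteq V^{\perp\perp}$ for any $V$. For 2(c), additivity $\exists_R(U\sqcup V)=\exists_R U\sqcup\exists_R V$, I would use that $R[U\cup V]=R[U]\cup R[V]$ together with the definition $U\sqcup V=(U\cup V)^{\perp\perp}$ and the fact that applying $^{\perp\perp}$ is a closure operator; the join in $\mathcal{B}(X)$ is computed by bi-orthogonal closure of the union, and relational images distribute over unions, so both sides reduce to $(R[U]\cup R[V])^{\perp\perp}$ after a short manipulation using $R[W^{\perp\perp}]^{\perp\perp}=R[W]^{\perp\perp}$.

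The two genuinely structural conditions are 2(d) and 2(e), and these are where condition 3 of Definition \ref{mof} (closure of $R[\{x\}]^{\perp}$ under $R$) and the transitivity of $R$ come into play. For idempotence 2(d), $\exists_R\exists_R U=\exists_R U$, I would show $R[R[U]^{\perp\perp}]^{\perp\perp}=R[U]^{\perp\perp}$; transitivity yields $R[R[U]]\subseteq R[U]$, and combined with reflexivity and the closure properties of $^{\perp\perp}$ this forces equality. For condition 2(e), $\exists_R(\exists_R U)^{\perp}=(\exists_R U)^{\perp}$, I would unpack $(\exists_R U)^{\perp}=R[U]^{\perp\perp\perp}=R[U]^{\perp}$ and then show $R[R[U]^{\perp}]\subseteq R[U]^{\perp}$, which is exactly the generalization of the frame condition 3 from singletons $\{x\}$ to arbitrary bi-orthogonally closed sets; once that inclusion is in hand, applying $^{\perp\perp}$ and using reflexivity gives the desired fixed-point equality. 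The main obstacle I anticipate is precisely lifting the singleton-level frame condition $R[R[\{x\}]^{\perp}]\subseteq R[\{x\}]^{\perp}$ to the set-level statement $R[R[U]^{\perp}]\subseteq R[U]^{\perp}$ needed for 2(e); I would handle this by writing $R[U]=\bigcup_{x\in U}R[\{x\}]$, noting that $R[U]^{\perp}=\bigcap_{x\in U}R[\{x\}]^{\perp}$ since orthogonal complement turns unions into intersections, and then verifying that $R$ preserves membership in each $R[\{x\}]^{\perp}$ by the given frame condition, so that $R$ preserves membership in the intersection as well.
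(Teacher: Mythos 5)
The paper does not prove this theorem (it is quoted from \cite{harding}), so your proposal has to stand on its own. Your decomposition is the right one, and your treatment of well-definedness, normality, the inflationary law, and especially condition 2(e) is correct: writing $R[U]^{\perp}=\bigcap_{x\in U}R[\{x\}]^{\perp}$ and noting that each $R[\{x\}]^{\perp}$ is closed under $R$ by condition 3 of Definition \ref{mof} does lift the singleton-level frame condition to $R[R[U]^{\perp}]\subseteq R[U]^{\perp}$, and with reflexivity this yields $\exists_R(\exists_R U)^{\perp}=(\exists_R U)^{\perp}$.

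The genuine gap is in 2(c) and 2(d), both of which you make rest on the identity $R[W^{\perp\perp}]^{\perp\perp}=R[W]^{\perp\perp}$, presented as part of a ``short manipulation'' using distribution of $R[\cdot]$ over unions and the closure properties of $(\cdot)^{\perp\perp}$. That identity is not a formal consequence of those facts: for a general reflexive, transitive $R$ on an orthoframe it fails. Take $X=\{x,y,z,w\}$ with $x\perp w$ and $y\perp w$ the only orthogonalities and $R$ the identity together with $yRz$; then $R[\{x\}]^{\perp\perp}=\{x\}^{\perp\perp}=\{x,y\}$, whereas $R[\{x\}^{\perp\perp}]^{\perp\perp}=\{x,y,z\}^{\perp\perp}=X$. (This $R$ violates condition 3 at $w$, as it must.) So the identity is precisely the point where condition 3 has to be invoked a second time, and your proof never supplies that step; note also that for 2(d) the set $W=R[U]$ need not be bi-orthogonally closed, so the inner $^{\perp\perp}$ cannot be discharged by transitivity alone. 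The repair: first show from condition 3 and reflexivity that $y\in R[\{x\}]^{\perp}$ iff $x\in R[\{y\}]^{\perp}$ (if $y\perp R[\{x\}]$ then $R[\{y\}]\subseteq R[R[\{x\}]^{\perp}]\subseteq R[\{x\}]^{\perp}$, and $x\in R[\{x\}]$). Then $R[W]^{\perp}=\{u:W\subseteq R[\{u\}]^{\perp}\}=\{u:W^{\perp\perp}\subseteq R[\{u\}]^{\perp}\}=R[W^{\perp\perp}]^{\perp}$, since each $R[\{u\}]^{\perp}$ is bi-orthogonally closed; this gives your identity, after which 2(c) and 2(d) go through as you sketch. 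Equivalently, one can check that $\exists_R$ is lower adjoint to $\forall_R V:=R[V^{\perp}]^{\perp}$ on $\mathcal{B}(X)$, which delivers preservation of all joins (hence 2(a) and 2(c), and in fact complete additivity) in one stroke.
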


For a monadic ortholattice $A$, Harding \cite{harding} constructed a monadic orthoframe from $A$ by defining a binary relation $R^M_A\subseteq A\setminus\{0\}\times A\setminus\{0\}$ on the MacLaren frame $X^M_A=\langle A\setminus\{0\};\perp^M_A\rangle$ by $xR^M_Ay\Leftrightarrow y\leq \exists x$. Harding, McDonald, and Peinado \cite{harding3} demonstrated that this construction gives rise to the MacNeille completion of $A$. The following definition generalizes the above construction of a monadic orthoframe to the setting of monadic quasi-implication algebras. 

\begin{definition}
    Let $A$ be a monadic quasi-implication algebra. Then define a binary relation $R^M_A\subseteq A\setminus\{0\}\times A\setminus\{0\}$ by $xR^M_Ay\Leftrightarrow y\cdot\Diamond x=1$. 
\end{definition}
We call $X^M_A=\langle A\setminus\{0\};\perp^M_A,R^M_A\rangle$ the \emph{monadic MacLaren frame} of $A$. The following results will be used within the proof of Theorem \ref{monadic maclaren frame}
\begin{proposition}\label{monotone}
    $\Diamond$ is monotone with respect to the lattice order induced by any bounded quasi-implication algebra $A$.  
\end{proposition}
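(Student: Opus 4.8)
The plan is to obtain monotonicity of $\Diamond$ as an immediate consequence of its additivity, which was already established in the course of proving Theorem~\ref{thm 4.6}. Recall that for a monadic quasi-implication algebra $A$, Theorem~\ref{thm 4.6} shows that $\langle A;\odot,\oplus,^*,0,1,\Diamond\rangle$ is a quantum monadic algebra, and in particular that $\Diamond$ is \emph{additive} with respect to the induced join, i.e. $\Diamond(x\oplus y)=\Diamond x\oplus\Diamond y$ for all $x,y\in A$. This is the only structural fact I would need.

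First I would fix the relevant order. By Lemma~\ref{quasi-implication algebra is an orthomodular lattice} the operation $\oplus$ is the join of the orthomodular lattice induced by $A$, and by Lemma~\ref{partial order} the lattice order coincides with $\preceq$. In any join-semilattice one has the standard identity $x\leq y\Longleftrightarrow x\oplus y=y$, so I would record this as the bridge between the order and the join operation that $\Diamond$ preserves.

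The argument is then a one-line deduction from additivity. Assuming $x\leq y$, so that $x\oplus y=y$, applying $\Diamond$ and invoking additivity gives
\[\Diamond y=\Diamond(x\oplus y)=\Diamond x\oplus\Diamond y,\]
which is precisely the statement $\Diamond x\leq\Diamond y$. Hence $\Diamond$ is monotone.

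I do not expect any genuine obstacle here; the only points requiring care are to confirm that additivity is indeed available (it is, directly from the additivity computation inside the proof of Theorem~\ref{thm 4.6}) and that $\oplus$ really is the join for the order under consideration, so that join-preservation yields monotonicity. An entirely equivalent route would be to observe that, by Theorem~\ref{thm 4.6}, $\Diamond$ is a quantifier on an orthomodular lattice and to appeal directly to condition~2(c) of Definition~\ref{monadic ortholattice}; the two approaches are identical in substance.
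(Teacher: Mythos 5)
Your proposal is correct and follows essentially the same route as the paper: the paper's own proof also deduces monotonicity directly from the additivity $\Diamond(x\oplus y)=\Diamond x\oplus\Diamond y$ established in Theorem \ref{thm 4.6}, via the identity $x\preceq y\Longleftrightarrow x\oplus y=y$. No gaps to report.
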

\begin{proof}
    For any $x,y\in A$, suppose $x\preceq y$ so that $y=x\oplus y$. It suffices to show that $\Diamond x\preceq\Diamond y$, i.e., $\Diamond y=\Diamond x\oplus\Diamond y$. The calculation is then an immediate consequence of Theorem \ref{thm 4.6} since $\Diamond y=\Diamond(x\oplus y)=\Diamond x\oplus\Diamond y$. 
    Therefore we conclude that $\Diamond x\preceq\Diamond y$, as desired. 
\end{proof}
\begin{lemma}\label{lemma 1}
    Let $A$ be a monadic quasi-implication algebra. Then: 
     \[R^M_A[\{x\}]^{\perp^M_A}=\{y\in A\setminus 0:y\cdot (\Diamond x\cdot 0)=1\}.\]
\end{lemma}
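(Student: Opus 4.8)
The plan is to unwind both the relation $R^M_A$ and the orthogonality relation $\perp^M_A$ into the partial order $\preceq$ of Lemma \ref{partial order} and the orthocomplementation $x^*=x\cdot 0$ of the induced orthomodular lattice (Lemma \ref{quasi-implication algebra is an orthomodular lattice}), after which the identity reduces to an elementary order-theoretic fact. First I would record that $xR^M_Ay$ means exactly $y\preceq\Diamond x$, so that $R^M_A[\{x\}]=\{y\in A\setminus\{0\}:y\preceq\Diamond x\}$, and that $u\perp^M_Av$ means exactly $u\preceq v^*$. Combining these, an element $z\in A\setminus\{0\}$ lies in $R^M_A[\{x\}]^{\perp^M_A}$ if and only if $z\preceq y^*$ for every $y\in A\setminus\{0\}$ with $y\preceq\Diamond x$. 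On the other side, the condition $y\cdot(\Diamond x\cdot 0)=1$ is precisely $y\preceq(\Diamond x)^*$, so the asserted equality is the claim that these two subsets of $A\setminus\{0\}$ coincide.

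For the inclusion $\supseteq$, I would use that orthocomplementation is order-inverting (Definition \ref{ortholattice}, 2(b)) in the induced orthomodular lattice: if $z\preceq(\Diamond x)^*$ and $y\preceq\Diamond x$, then $(\Diamond x)^*\preceq y^*$ and hence $z\preceq y^*$, so $z$ lies in the relational orthocomplement set. For the reverse inclusion $\subseteq$, the key observation is that $\Diamond x$ is itself an admissible witness: since $x\in A\setminus\{0\}$ and $x\cdot\Diamond x=1$ by condition 2(a) of Definition \ref{def of mqia}, we have $x\preceq\Diamond x$, and were $\Diamond x=0$ we would get $x\preceq 0$, forcing $x=0$, a contradiction; thus $\Diamond x\in A\setminus\{0\}$. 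Taking $y=\Diamond x$, which trivially satisfies $\Diamond x\preceq\Diamond x$ by reflexivity of $\preceq$, in the defining condition for membership in $R^M_A[\{x\}]^{\perp^M_A}$ immediately yields $z\preceq(\Diamond x)^*$.

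The only point requiring care---and the nearest thing to an obstacle---is verifying that $\Diamond x\neq 0$, since otherwise $\Diamond x$ would not be a legitimate element of the frame $A\setminus\{0\}$ and could not be used as the witness in the $\subseteq$ direction (and indeed both sides would then degenerate, as the quantification becomes vacuous). Once this is in hand the argument is purely the order-inverting behaviour of $x\mapsto x^*$, so I expect the proof to be short, with the substantive content being the translation into the induced orthomodular lattice rather than any nontrivial computation.
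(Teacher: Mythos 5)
Your proof is correct and follows essentially the same route as the paper's: both directions reduce to the order-theoretic translation, the $\supseteq$ inclusion uses that $^*$ is order-inverting together with transitivity of $\preceq$, and the $\subseteq$ inclusion uses $\Diamond x$ itself as the witness element of $R^M_A[\{x\}]$. Your explicit verification that $\Diamond x\neq 0$ (so that $\Diamond x$ genuinely lies in the frame $A\setminus\{0\}$ and can serve as a witness) is a detail the paper's proof passes over silently, and is a worthwhile addition.
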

\begin{proof}
    For the left-to-right inclusion, take any $y\in R^M_A[\{x\}]^{\perp}$. Now observe that $\Diamond x\cdot\Diamond x=1$ for all $x\in A$ and hence $\Diamond x\in R^M_A[\{x\}]$ so $y\perp^M_A\Diamond x$. Thus we have $y\cdot(\Diamond x\cdot 0)=1$ by the definition of $\perp^M_A$. 

    For the right-to-left inclusion, suppose $y\cdot(\Diamond x\cdot 0)=1$ for some $y\in A\setminus\{0\}$. It suffices to show that $y\perp^M_Az$ for all $z\in R^M_A[\{x\}]$, i.e., that $y\preceq z^*$ for all $z\in R^M_A[\{x\}]$. Therefore let $z\in R^M_A[\{x\}]$. Since $y\cdot(\Diamond x\cdot 0)=1$, we have $y\preceq(\Diamond x)^*$. Moreover, since $z\in R^M_A[\{x\}]$, we have $z\preceq\Diamond x$. Since $^*$ is an orthocomplementation, it is order-inverting and hence $(\Diamond x)^*\preceq z^*$ and thus $y\preceq z^*$ by the transitivity of $\preceq$, as desired.    
\end{proof}
\par
\vspace{1cm}
\begin{lemma}\label{lemma 2}
     If $A$ is a monadic quasi-implication algebra, then $z\cdot\Diamond(\Diamond x\cdot 0)=1$ for all $z\in R^M_A[R^M_A[\{x\}]^{\perp^M_A}]$. 
\end{lemma}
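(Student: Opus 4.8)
The plan is to translate the set-theoretic membership condition $z \in R^M_A[R^M_A[\{x\}]^{\perp^M_A}]$ into a purely order-theoretic statement about the induced partial order $\preceq$, and then reduce the conclusion $z \cdot \Diamond(\Diamond x \cdot 0)=1$ to a short chain of inequalities in $\langle A;\preceq\rangle$ that can be discharged using monotonicity of $\Diamond$ together with the fixpoint identity supplied by condition 2(b) of Definition \ref{def of mqia}.

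First I would unwind the definition of $R^M_A$. Since $u\,R^M_A\,v$ means $v \cdot \Diamond u = 1$, i.e. $v \preceq \Diamond u$, membership $z \in R^M_A[R^M_A[\{x\}]^{\perp^M_A}]$ amounts to the existence of some $w \in R^M_A[\{x\}]^{\perp^M_A}$ with $z \preceq \Diamond w$. Applying Lemma \ref{lemma 1} to describe the orthogonal set, this $w$ satisfies $w \cdot(\Diamond x \cdot 0)=1$, that is $w \preceq (\Diamond x)^*$ where $(\Diamond x)^* = \Diamond x \cdot 0$. So after this step the hypothesis reads: there is $w$ with $w \preceq (\Diamond x)^*$ and $z \preceq \Diamond w$, while the goal $z \cdot \Diamond(\Diamond x \cdot 0)=1$ is simply $z \preceq \Diamond(\Diamond x \cdot 0)$.

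The key observation is that condition 2(b) of Definition \ref{def of mqia} gives $\Diamond(\Diamond x \cdot 0) = \Diamond x \cdot 0 = (\Diamond x)^*$, so the target simplifies to $z \preceq (\Diamond x)^*$. I would then close the argument with monotonicity: by Proposition \ref{monotone}, the inequality $w \preceq (\Diamond x)^*$ yields $\Diamond w \preceq \Diamond\bigl((\Diamond x)^*\bigr) = \Diamond(\Diamond x \cdot 0) = (\Diamond x)^*$, invoking 2(b) a second time. Chaining this with $z \preceq \Diamond w$ via transitivity of $\preceq$ (Lemma \ref{partial order}) delivers $z \preceq (\Diamond x)^* = \Diamond(\Diamond x \cdot 0)$, hence $z \cdot \Diamond(\Diamond x \cdot 0)=1$.

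The only real subtlety, and the step I would be most careful about, is keeping the two uses of the fixpoint identity $\Diamond(\Diamond x \cdot 0)=\Diamond x \cdot 0$ straight: it is applied once to rewrite the conclusion and once to collapse $\Diamond\bigl((\Diamond x)^*\bigr)$ after appealing to monotonicity. Everything else is bookkeeping, namely correctly reading off the relational image and the orthocomplement through Lemma \ref{lemma 1} and invoking transitivity. No genuinely hard computation is required, so I do not anticipate a serious obstacle beyond notation management.
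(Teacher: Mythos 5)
Your proposal is correct and follows essentially the same route as the paper's proof: unwind the relational image to get a witness $w$ with $w\preceq(\Diamond x)^*$ and $z\preceq\Diamond w$ (via Lemma \ref{lemma 1}), then apply Proposition \ref{monotone} and transitivity of $\preceq$. The only cosmetic difference is that you invoke the fixpoint identity $\Diamond(\Diamond x\cdot 0)=\Diamond x\cdot 0$ inside the lemma, whereas the paper stops at $z\preceq\Diamond(\Diamond x)^*$ and defers that collapse to the proof of Theorem \ref{monadic maclaren frame}; this is harmless.
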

\begin{proof}
    Take any $z\in R^M_A[R^M_A[\{x\}]^{\perp^M_A}]$ so that $yR^M_A z$ for some $y\in R^M_A[\{x\}]^{\perp^M_A}$. Then $y\cdot(\Diamond x\cdot 0)=1$ as well as $z\cdot\Diamond y=1$, i.e., $y\preceq(\Diamond x)^*$ and $z\preceq\Diamond y$. By Proposition \ref{monotone}, $\Diamond$ is monotone so the former inequality yields $\Diamond y\preceq\Diamond(\Diamond x)^*$ and hence $z\preceq\Diamond(\Diamond x)^*$ by the transitivity of $\preceq$, so $z\cdot\Diamond(\Diamond x\cdot 0)=1$.  
\end{proof}

\begin{theorem}\label{monadic maclaren frame}
    If $A$ is a monadic quasi-implication algebra, then its monadic MacLaren frame $X^M_A$ forms a monadic orthoframe. 
\end{theorem}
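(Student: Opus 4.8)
The plan is to verify, in turn, the three defining conditions of a monadic orthoframe from Definition \ref{mof} for the triple $\langle A\setminus\{0\};\perp^M_A,R^M_A\rangle$. Condition 1 requires that $\langle A\setminus\{0\};\perp^M_A\rangle$ be an orthoframe, and this has already been established in the proposition verifying that the MacLaren frame of a bounded quasi-implication algebra is an orthoframe; since a monadic quasi-implication algebra has a bounded quasi-implication algebra as a reduct, nothing further is needed here.

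For condition 2, I would show that $R^M_A$ is reflexive and transitive. Recall that $xR^M_Ay$ means $y\cdot\Diamond x=1$, i.e.\ $y\preceq\Diamond x$ under the order of Lemma \ref{partial order}. Reflexivity is immediate from condition 2(a) of Definition \ref{def of mqia}, which gives $x\cdot\Diamond x=1$ and hence $x\preceq\Diamond x$. For transitivity, suppose $xR^M_Ay$ and $yR^M_Az$, so that $y\preceq\Diamond x$ and $z\preceq\Diamond y$. Applying the monotonicity of $\Diamond$ (Proposition \ref{monotone}) to $y\preceq\Diamond x$ yields $\Diamond y\preceq\Diamond\Diamond x$, and since $\Diamond$ is idempotent (Proposition \ref{idempotent}) we have $\Diamond\Diamond x=\Diamond x$; thus $z\preceq\Diamond y\preceq\Diamond x$, and transitivity of $\preceq$ gives $z\preceq\Diamond x$, i.e.\ $xR^M_Az$.

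The remaining condition 3, namely $R^M_A[R^M_A[\{x\}]^{\perp^M_A}]\subseteq R^M_A[\{x\}]^{\perp^M_A}$, is where the substantive content lies, but Lemmas \ref{lemma 1} and \ref{lemma 2} have been arranged precisely to dispatch it. Taking any $z\in R^M_A[R^M_A[\{x\}]^{\perp^M_A}]$, Lemma \ref{lemma 2} gives $z\cdot\Diamond(\Diamond x\cdot 0)=1$. Now condition 2(b) of Definition \ref{def of mqia} supplies the key identity $\Diamond(\Diamond x\cdot 0)=\Diamond x\cdot 0$, so that in fact $z\cdot(\Diamond x\cdot 0)=1$. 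By the characterization of $R^M_A[\{x\}]^{\perp^M_A}$ given in Lemma \ref{lemma 1}, this says exactly that $z\in R^M_A[\{x\}]^{\perp^M_A}$, which is the desired inclusion.

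I expect condition 3 to be the main obstacle in principle, since it is the genuinely relational closure requirement; however, the preparatory Lemmas \ref{lemma 1} and \ref{lemma 2} have already absorbed its computational content, reducing it to a single application of the identity $\Diamond(\Diamond x\cdot 0)=\Diamond x\cdot 0$. Thus the only real care needed is to invoke the two lemmas in the correct order and to recognize that $\Diamond(\Diamond x\cdot 0)=\Diamond x\cdot 0$ is exactly what collapses the conclusion of Lemma \ref{lemma 2} onto the set described in Lemma \ref{lemma 1}.
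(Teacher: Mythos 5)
Your proposal is correct and follows essentially the same route as the paper's proof: reflexivity from condition 2(a), transitivity via monotonicity (Proposition \ref{monotone}) and idempotence (Proposition \ref{idempotent}) of $\Diamond$, and condition 3 by chaining Lemma \ref{lemma 2}, the identity $\Diamond(\Diamond x\cdot 0)=\Diamond x\cdot 0$ from Definition \ref{def of mqia} 2(b), and the characterization in Lemma \ref{lemma 1}. The only difference is that you explicitly note that the underlying orthoframe condition is inherited from the bounded reduct, which the paper leaves implicit.
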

\begin{proof}
 Theorems \ref{thm 4.5} and \ref{thm 4.6} allow us to generalize the proof of Proposition 7.27 in \cite{harding} to the case of monadic quasi-implication algebras. By condition 2(a) of Definition \ref{def of mqia}, we have $x\cdot\Diamond x=1$ and hence $xR^M_Ax$ for all $x\in A\setminus\{0\}$ so $R^M_A$ is reflexive. To see that $R^M_A$ is transitive, assume that $xR^M_Ay$ and $yR^M_Az$ so that $y\cdot\Diamond x=1$ and $z\cdot\Diamond y=1$. Therefore $y\preceq\Diamond x$ and $z\preceq\Diamond y$. Applying Proposition \ref{monotone} to the fact that $y\preceq\Diamond x$ yields $\Diamond y\preceq\Diamond\Diamond x$. Then by Proposition \ref{idempotent}, we have $\Diamond\Diamond x=\Diamond x$ and hence $\Diamond y\preceq\Diamond x$. Thus by the transitivity of $\preceq$, we have $z\preceq\Diamond x$ so $z\cdot\Diamond x=1$ and hence $xR^M_Az$, as desired.

    We now verify that $R^M_A[\{x\}]^{\perp^M_A}$ is closed under $R^M_A$, i.e., $R^M_A[R^M_A[\{x\}]^{\perp^M_A}]\subseteq R^M_A[\{x\}]^{\perp^M_A}$ for each $x\in A\setminus\{0\}$. First note that we have: \[R^M_A[\{x\}]=\{y\in A\setminus \{0\}:y\cdot \Diamond x=1\}\] and by Lemma \ref{lemma 1}, we have: 
    \[R^M_A[\{x\}]^{\perp^M_A}=\{y\in A\setminus \{0\}:y\cdot (\Diamond x\cdot 0)=1\}.\] Hence choose any $y\in R^M_A[\{x\}]^{\perp^M_A}$ with $yR^M_Az$, i.e., $z\in R^M_A[R^M_A[\{x\}]^{\perp^M_A}]$. Then: \[y\cdot(\Diamond x\cdot 0)=z\cdot\Diamond y=1.\] Therefore $z\cdot\Diamond(\Diamond x\cdot 0)=1$ by Lemma \ref{lemma 2}, but $\Diamond(\Diamond x\cdot 0)=\Diamond x\cdot 0$ so $z\cdot(\Diamond x\cdot 0)=1$ which by Lemma \ref{lemma 1} implies that $z\in R^M_A[\{x\}]^{\perp^M_A}$. 
\end{proof}

For a monadic ortholattice $A$, Harding, McDonald, and Peinado \cite{harding3} constructed a monadic orthoframe from $A$ by defining a binary relation $R^G_A\subseteq \mathfrak{F}(A)\times \mathfrak{F}(A)$ on the Goldblatt frame $X^G_A=\langle \mathfrak{F}(A),\perp^G_A\rangle$ by $\alpha R^G_A\beta\Leftrightarrow \exists[\alpha]\subseteq\beta$ where $\exists[\alpha]=\{\exists x:x\in\alpha\}$. They went on to demonstrate that this construction gives rise to the canonical completion as well as the canonical extension of $A$. The following definition generalizes the above construction of a monadic orthoframe to the setting of monadic quasi-implication algebras. 

\begin{definition}
    Let $A$ be a monadic quasi-implication algebra. Then define a binary relation $R^G_A\subseteq\mathfrak{F}(A)\times\mathfrak{F}(A)$ by $\alpha R^G_A\beta\Leftrightarrow\Diamond[\alpha]\subseteq\beta$. 
\end{definition}

We call $X^G_A=\langle\mathfrak{F}(A);\perp^G_A,R^G_A\rangle$ the \emph{monadic Goldblatt frame} of $A$. 

\begin{theorem}\label{monadic qia is a gold}
    If $A$ is a monadic quasi-implication algebra, then its monadic Goldblatt frame $X^G_A$ forms a monadic orthoframe. 
\end{theorem}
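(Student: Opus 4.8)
The plan is to verify the three defining conditions of a monadic orthoframe (Definition \ref{mof}) for $X^G_A = \langle \mathfrak{F}(A); \perp^G_A, R^G_A\rangle$. Condition~1 is already in hand: $\langle \mathfrak{F}(A); \perp^G_A\rangle$ is an orthoframe by the proposition establishing that the Goldblatt frame of a bounded quasi-implication algebra is an orthoframe. For reflexivity and transitivity of $R^G_A$ (condition~2) I would argue entirely through the induced order $\preceq$. Since $x \cdot \Diamond x = 1$ by Definition \ref{def of mqia}.2(a), we have $x \preceq \Diamond x$, so the first filter axiom forces $\Diamond[\alpha] \subseteq \alpha$ for every $\alpha \in \mathfrak{F}(A)$; hence $\alpha R^G_A \alpha$. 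For transitivity, if $\Diamond[\alpha] \subseteq \beta$ and $\Diamond[\beta] \subseteq \gamma$, then for $x \in \alpha$ we get $\Diamond x \in \beta$ and thus $\Diamond\Diamond x \in \gamma$; but $\Diamond\Diamond x = \Diamond x$ by Proposition \ref{idempotent}, so $\Diamond[\alpha] \subseteq \gamma$ and $\alpha R^G_A \gamma$.

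The substance of the proof is condition~3, namely $R^G_A[R^G_A[\{\alpha\}]^{\perp^G_A}] \subseteq R^G_A[\{\alpha\}]^{\perp^G_A}$. My first step is to reduce the orthogonal $R^G_A[\{\alpha\}]^{\perp^G_A}$ to a single filter. The set $R^G_A[\{\alpha\}] = \{\beta \in \mathfrak{F}(A) : \Diamond[\alpha] \subseteq \beta\}$ is an up-set under inclusion whose least element is the filter $\beta_0$ generated by $\Diamond[\alpha]$; this $\beta_0$ is proper since $\Diamond[\alpha] \subseteq \alpha$ forces $\beta_0 \subseteq \alpha$. Because $\gamma \perp^G_A \beta$ persists whenever $\beta$ is enlarged, orthogonality to the whole up-set is equivalent to orthogonality to its least element, giving $R^G_A[\{\alpha\}]^{\perp^G_A} = \{\gamma \in \mathfrak{F}(A): \gamma \perp^G_A \beta_0\}$. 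It then remains to show that this set is closed under $R^G_A$: assuming $\gamma \perp^G_A \beta_0$, witnessed by some $x \in \gamma$ with $x \cdot 0 \in \beta_0$, and $\Diamond[\gamma] \subseteq \delta$, I must produce a witness for $\delta \perp^G_A \beta_0$.

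The natural candidate witness is $\Diamond x$, which lies in $\delta$ because $x \in \gamma$; the difficulty is that $(\Diamond x) \cdot 0 \preceq x \cdot 0$ runs the wrong way for the upward-closed filter $\beta_0$, so membership of $x \cdot 0$ does not immediately give membership of $(\Diamond x) \cdot 0$. Here I would invoke Theorem \ref{thm 4.6}, which endows $A$ with the structure of a quantum monadic algebra in which $\Diamond$ is the quantifier and the $\Diamond$-closed elements form an orthomodular sub-lattice closed under meets and $^*$. Since the generators of $\beta_0$ are the finite meets of elements $\Diamond a$ (each $\Diamond$-closed by Proposition \ref{idempotent}), such meets are again $\Diamond$-closed, so $x \cdot 0 \in \beta_0$ yields a $\Diamond$-closed element $c \in \beta_0$ with $c \preceq x \cdot 0$. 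Then $x \preceq c^*$, and as $c^*$ is also $\Diamond$-closed, monotonicity of $\Diamond$ (Proposition \ref{monotone}) gives $\Diamond x \preceq \Diamond(c^*) = c^*$, whence $c \preceq (\Diamond x) \cdot 0$ and therefore $(\Diamond x) \cdot 0 \in \beta_0$ by upward closure. Thus $\Diamond x \in \delta$ with $(\Diamond x)\cdot 0 \in \beta_0$ witnesses $\delta \perp^G_A \beta_0$, completing condition~3.

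I expect the reduction of the orthogonal to the single generated filter $\beta_0$, together with the passage through a $\Diamond$-closed element below $x \cdot 0$, to be the main obstacle, since it is exactly this closed-element structure that repairs the wrong-way inequality $(\Diamond x)\cdot 0 \preceq x \cdot 0$. I would also record the available shortcut paralleling the remark in the proof of Theorem \ref{monadic maclaren frame}: via Theorem \ref{thm 4.6} one checks that the proper filters, the relation $\perp^G_A$, and the relation $R^G_A$ of $A$ coincide respectively with the proper filters, Goldblatt orthogonality, and the quantifier relation $\exists[\alpha] \subseteq \beta$ of the induced monadic ortholattice (as $\Diamond = \exists$), so that $X^G_A$ is literally the monadic Goldblatt frame of that structure and the conclusion follows from Harding, McDonald, and Peinado \cite{harding3}.
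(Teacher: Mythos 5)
Your proof is correct, and its overall architecture coincides with the paper's: reflexivity and transitivity of $R^G_A$ via $x\preceq\Diamond x$, monotonicity, and idempotence, and condition~3 via the reduction of $R^G_A[\{\alpha\}]^{\perp^G_A}$ to the orthogonal of the single filter generated by $\Diamond[\alpha]$ (your $\beta_0$, the paper's $\gamma$). The one point where you diverge is the final step of condition~3. The paper normalizes the witness on the $\beta_0$-side: it shows every $\beta\perp^G_A\beta_0$ in fact contains an element of the form $\Diamond x\cdot 0$ with $x=x_1\odot\cdots\odot x_n\in\alpha$ (using $\Diamond x\preceq\Diamond x_1\odot\cdots\odot\Diamond x_n\preceq y$ and upward closure), and then closure under $R^G_A$ is immediate from axiom 2(b), since $\Diamond(\Diamond x\cdot 0)=\Diamond x\cdot 0$ persists into any $\omega\supseteq\Diamond[\beta]$. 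You instead keep the witness in the moving filter, upgrade it to $\Diamond x\in\delta$, and repair the complementary side by passing through a $\Diamond$-closed element $c\preceq x\cdot 0$ among the generators of $\beta_0$, using that the closed elements form a sub-ortholattice (closed under meets and $^*$) so that $\Diamond x\preceq\Diamond(c^*)=c^*$ and hence $(\Diamond x)\cdot 0\succeq c$. Both arguments are sound and rest on the same underlying fact, namely that the relevant orthogonality witnesses can be taken among $\Diamond$-fixed elements; the paper's version is marginally more economical because it invokes axiom 2(b) verbatim rather than routing through Theorem \ref{thm 4.6} and the closed-element sublattice. Your closing remark that the whole theorem can be outsourced to \cite{harding3} via the translation of Theorem \ref{thm 4.6} is exactly the shortcut the paper itself gestures at in its opening sentence of the proof.
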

\begin{proof}
Similarly to that of Theorem \ref{monadic maclaren frame}, Theorems \ref{thm 4.5} and \ref{thm 4.6} allow us to generalize the proof of Proposition 3.5 in \cite{harding3} to the case of monadic quasi-implication algebras. For reflexivity, let $\alpha\in\mathfrak{F}(A)$ and let $x\in\Diamond[\alpha]$ so that $x=\Diamond y$ for some $y\in\alpha$. Since $y\cdot\Diamond y=1$ and $\alpha$ is upward closed, we have $\Diamond y\in\alpha$ but $\Diamond y=x$ so $x\in\alpha$. Therefore $\Diamond[\alpha]\subseteq\alpha$ and hence $\alpha R^G_A\alpha$ for every $\alpha\in\mathfrak{F}(A)$.
    
    To see that $R^G_A$ is transitive, assume that $\alpha R^G_A\beta$ and $\beta R^G_A\gamma$ for arbitrary $\alpha,\beta,\gamma\in\mathfrak{F}(A)$ so that $\Diamond[\alpha]\subseteq\beta$ and $\Diamond[\beta]\subseteq\gamma$. Now suppose $x\in\Diamond[\alpha]$ so that $x=\Diamond y$ for some $y\in\alpha$. By hypothesis we have $\Diamond y\in\beta$ and hence $\Diamond\Diamond y\in\Diamond[\beta]$. Again, by hypothesis, we have $\Diamond\Diamond y\in\gamma$. Then by Proposition \ref{idempotent}, we have $\Diamond\Diamond y=\Diamond y$ and hence $\Diamond y=x\in\gamma$. Therefore $\Diamond[\alpha]\subseteq\gamma$ so that $\alpha R^G_A\gamma$, as desired. 

    We now verify that $R^G_A[R^G_A[\{\alpha\}]^{\perp^G_A}]\subseteq R^G_A[\{\alpha\}]^{\perp^G_A}$ for each $\alpha\in\mathfrak{F}(A)$. Let $\gamma$ be the smallest filter generated by $\Diamond[\alpha]$. Then $R^G_A[\{\alpha\}]^{\perp^G_A}=\{\gamma\}^{\perp^G_A}$ since $\gamma$ is the smallest filter belonging to $R^G_A[\{\alpha\}]$ by our hypothesis. Now observe that for any $\beta\in\{\gamma\}^{\perp^G_A}$, i.e., $\beta\perp^G_A\gamma$, there exists $y\in A$ such that $y\in \gamma$ and $y\cdot0\in\beta$. Therefore one can find $x_1,\dots,x_n\in\alpha$ satisfying: \[(\Diamond x_1\odot\cdots\odot\Diamond x_n)\cdot y=1\] since $\gamma$ is the filter generated by $\Diamond[\alpha]$. By fixing $x=x_1\odot\dots\odot x_n$, we have: \[\Diamond x\cdot(\Diamond x_1\odot\dots\odot\Diamond x_n)=1\] and hence $\Diamond x\cdot y=1$ since we already have $(\Diamond x_1\odot\cdots\odot\Diamond x_n)\cdot y=1$. Therefore $(y\cdot 0)\cdot(\Diamond x\cdot 0)=1$ and hence $(\Diamond x\cdot 0)\in\beta$ since $y\cdot0\in\beta$ and $\beta$ is a filter. Therefore, we find that: 
    \[R^G_A[\{\alpha\}]^{\perp^G_A}=\{\beta\in\mathfrak{F}(A):\Diamond x\cdot0\in\beta\hspace{.2cm}\text{for some $x\in\alpha$}\}.\] Finally, from the definition of $R^G_A$ and the fact $\Diamond(\Diamond x\cdot 0)=\Diamond x\cdot 0$ for any $x\in A$ by condition 2(b) of Definition \ref{def of mqia}, it follows that for $\beta,\omega\in\mathfrak{F}(A)$ such that $\beta\in R^G_A[\{\alpha\}]^{\perp^G_A}$ with $\beta R^G_A\omega$ i.e., $\omega\in R^G_A[R^G_A[\{\alpha\}]^{\perp^G_A}]$, we have $\omega \in R^G_A[\{\alpha\}]^{\perp^G_A}$, which establishes the desired inclusion.             
\end{proof}
\section{Conclusions and related lines of research}
In section 3, we generalized the constructions of an orthoframe obtained by MacLaren \cite{maclaren} and by Goldblatt \cite{goldblatt} to the setting of bounded quasi-implication algebras. In section 4, we introduced monadic quasi-implication algebras and demonstrated that the category $\mathbf{QMA}$ of quantum monadic algebras is isomorphic to the category $\mathbf{MQIA}$ of monadic quasi-implication algebras. In section 5, we generalized the constructions of a monadic orthoframe given by Harding \cite{harding} and by Harding, McDonald, and Peinado \cite{harding3} to the setting of monadic quasi-implication algebras. 

We note that the various constructions of a monadic orthoframe from a monadic quasi-implication algebra $A$ given in Theorem \ref{monadic maclaren frame} and Theorem \ref{monadic qia is a gold} do not exploit the orthomodularity of the lattice structure induced by $A$. Future lines of research include developing Kripke frames for quantum monadic algebras (i.e., Kripke frames whose induced complete lattices of bi-orthogonally closed subsets form quantum monadic algebras) and then generalizing such frames to the setting of monadic quasi-implication algebras. This will however lead to some technical challenges due to the fact that the condition of orthomodularity on an ortholattice is not first-order definable by an orthogonality relation alone. This is a well-known result due to Goldblatt \cite{goldblatt}.    
 
\section*{Declarations}
\subsection*{Ethical approval}
Not applicable. 
\subsection*{Funding}
This research was funded by the CGS-D SSHRC grant no.~767-2022-1514. 
\subsection*{Availability of data and materials}
Not applicable.

\end{document}